\newcommand*{\Scale}[2][4]{\scalebox{#1}{$#2$}}%
      \def\dC{{\mathbb C}}
\def\dD{{\mathbb D}}
      \def\dR{{\mathbb R}}
   \def\dT{{\mathbb T}}
\def\cA{{\mathcal A}}   \def\cB{{\mathcal B}}   \def\cC{{\mathcal C}}
\def\cD{{\mathcal D}}
   \def\cW{{\mathcal W}}
\def\Im{\operatorname{Im}}
\DeclareMathOperator{\Rl}{Re}
\def\downbar#1{
\setbox10=\hbox{$#1$}
   \dimen10=\ht10 \advance\dimen10 by 2.5pt
   \ifdim \dimen10<15pt 
      \advance\dimen10 by -0.5pt
      \dimen11=\dimen10
      \advance\dimen10 by 2.5pt
      \lower \dimen11
   \else \lower \ht10 \fi
   \hbox {\hskip 1.5pt \vrule height \dimen10 depth \dp10}\relax}
 \def\upbar#1{
 \setbox10=\hbox{$#1$}
    \dimen10=\ht10 \advance\dimen10 by \dp10 \advance\dimen10 by 2.5pt
    \ifdim \dimen10<15pt 
       \advance\dimen10 by 2pt \fi
    \raise 2.5pt \hbox {\hskip -1.5pt \vrule height \dimen10}\relax}
\def\cfr#1#2{
 \downbar{#2} \hskip -1.5pt {\; #1 \; \over \thinspace \  #2}\upbar{#1}}
\newtheorem{theorem}{Theorem}[section]
\newtheorem{proposition}[theorem]{Proposition}
\newtheorem{corollary}[theorem]{Corollary}
\theoremstyle{definition}
\newtheorem{remark}[theorem]{Remark}
\newtheorem*{conjecture}{Conjecture}
\numberwithin{equation}{section}
\begin{document}
\title[Wall's continued fractions and Jacobi matrices]
{A note on Wall's modification of the Schur algorithm and linear pencils of Jacobi matrices}

\author{Maxim  Derevyagin}
\address{
Maxim Derevyagin\\
University of Mississippi\\
Department of Mathematics\\
Hume Hall 305 \\
P. O. Box 1848 \\
University, MS 38677-1848, USA }
\email{derevyagin.m@gmail.com}

\date{\today}

\subjclass{Primary 47A57, 47B36; Secondary 30E05, 30B70, 42C05}
\keywords{the Schur algorithm, Jacobi matrix, linear pencil, interpolation problem, 
continued fraction, Nevanlinna function, orthogonal polynomials, orthogonal rational functions}

\begin{abstract}
In this note  we revive a transformation that was introduced by H. S. Wall and that establishes a one-to-one correspondence between continued fraction representations of Schur, Carath\'eodory, and Nevanlinna functions. This transformation can be considered as an analog of the Szeg\H{o} mapping but it is based on the Cayley transform, which relates the upper half-plane to the unit disc. For example, it will be shown that, when applying the Wall transformation, instead of OPRL, we get a sequence of orthogonal rational functions that satisfy three-term recurrence relation of the form $(H-\lambda J)u=0$, where $u$ is a semi-infinite vector, whose entries are the rational functions. Besides, $J$ and $H$ are Hermitian Jacobi matrices for which a version of the Denisov-Rakhmanov theorem holds true. Finally we will demonstrate how pseudo-Jacobi polynomials (aka Routh-Romanovski polynomials) fit into the picture.  
\end{abstract}

\maketitle

\section{Introduction}

In September of the year 1916, Issai Schur submitted the first paper of the series of two \cite{Schur1}, \cite{Schur2} that presented a new parametrization of functions that are analytic and bounded by 1 in the open unit disc $\dD$ and an algorithm for computing the corresponding parameters. The algorithm is now known as the Schur algorithm. In fact, it's been literally a hundred years, and yet there is still a continuing interest in further developing the findings of I. Schur. One of the reasons for that is because the Schur algorithm is a successor of the Euclidean algorithm, which has many theoretical and practical applications. Another one is that the Schur algorithm is intimately related to orthogonal polynomials on the unit circle (hereafter abbreviated by OPUC) and the latter has seen an enormous progress since the beginning of the 21st century. However, the ideology of this note is based on an old result that appeared in 1944 in a paper by Hubert Stanely Wall \cite{Wall44}. Nevertheless, a proper recasting of the result gives new insights and perspectives to the theory of orthogonal polynomials. We will see it later but now let's briefly recall basics of the Schur algorithm. First of all, we need to consider a Schur function $f$, which is an analytic function mapping $\dD$ to its closure $\overline{\dD}$, that is, 
\begin{equation*} 
\sup_{z\in\dD}\, |f(z)| \leq 1.
\end{equation*}   
As a matter fact, a Schur function is the input for the Schur algorithm. Namely, given a Schur function $f$ the Schur algorithm generates a sequence of Schur functions $\{f_n\}_{n=0}^{\infty}$ by means of the following relations
\begin{equation} \label{SchurAlg} 
\begin{split}
f_0(z)&=f(z), \\
 f_n(z) &= \frac{\gamma_n + zf_{n+1}(z)}{1+\bar\gamma_n zf_{n+1}(z)}, \quad n=0, 1, 2, \dots,
\end{split}
\end{equation}  
where $\gamma_n=f_n(0)$ are called Schur parameters and satisfy the relation
\begin{equation}\label{SchurCond}
|\gamma_n|<1, \quad n=0, 1, 2, \dots.
\end{equation}
To be more precise, the Schur algorithm gives a sequence that is either finite or infinite. In what follows we are mainly interested in Schur functions that produce infinite sequences of Schur parameters. In other words, when we say that $f$ is a Schur function we mean that there is an infinite sequence of linear fractional transformations \eqref{SchurAlg} generated by $f$ unless said otherwise. 

The Schur algorithm is the key to Wall's theory and we are now ready to proceed with it, which we are going to do in the following way. The goal of the next section is to show that orthogonal polynomials on the real line (henceforth OPRL) and OPUC correspond to totally different interpolation problems. Hence, in addition to the attempt to identify OPUC and OPRL, it is also natural to find the real line image of OPUC when applying a transformation that keeps the underlying interpolation problems equivalent. This is what is actually done in Section 3 and Section 4 through the findings of H. S. Wall. The section after that is where we simply adopt a more general theory developed in \cite{BDZh}, \cite{Der10}, and \cite{DZh} to this very particular case of Wall's continued fraction representation of Nevanlinna functions. Besides, Section 5 reveals the relation between the spectral theories of OPUC and linear pencils of Jacobi matrices. At the end, in Section 6, we give an example based on pseudo-Jacobi polynomials, which are sometimes called Romanovski or even Routh-Romanovski polynomials.

\section{The same old moment problems}

One of the original ideas to construct the Schur algorithm was to solve an interpolation problem, which now bears the name Schur's coefficient problem (for instance see \cite[Chapter 3, Section 3]{A65} or \cite[Section 9]{DK03}). This interpolation problem is not of principal interest here and it's better for us to consider an equivalent problem in the class of Carath\'eodory functions (see \cite[Chapter 5, Section 1]{A65}).  
Before formulating it, recall that a Carath\'eodory function $F$ is an analytic function on $\dD$ which obeys
\begin{equation} \label{CarCond} 
F(0)=1, \qquad  \Rl F(z) >0\,\mbox{ when }\,z\in\mathbb{D}. 
\end{equation}
It is well known that if $f$ is a Schur function then the function
\begin{equation} \label{Carathe} 
F(z) = \frac{1+zf(z)}{1-zf(z)}
\end{equation}
is a Carth\'eodory function and vice versa.
Indeed, for the function $F$ defined by \eqref{Carathe} it is easily seen that 
\begin{equation}\label{ReOfCar}
\Rl F(z)=\frac{1-|zf(z)|^2}{|1-zf(z)|^2}>0, \quad z\in\dD. 
\end{equation} 
So, we are in the position to formulate the Carath\'eodory coefficient problem: given the complex numbers $c_1$, $c_2$, $c_3$, \dots, find necessary and sufficient conditions for the function 
\begin{equation}\label{CarCP}
F(z)=1+2c_1z+2c_2z^2+2c_3z^3+\dots
\end{equation}
to be a Carath\'eodory function. To get to the conditions using the Schur algorithm is relatively easy since the Schur parameters can be expressed in terms of the coefficients $c_1$, $c_2$, $c_3$, \dots. For instance, the formulas can be extracted in a similar way as it is done in \cite[Section 1.3]{OPUC1} (see also \cite[Section 9]{DK03}). Therefore, the condition \eqref{SchurCond} is a solution to the the Carath\'eodory coefficient problem. Clearly, there could be only one function corresponding to the coefficients $c_1$, $c_2$, $c_3$, $\dots$ due to  the uniqueness theorem for analytic functions. This means that we don't have to think about describing all possible functions generated by the given sequence $c_1$, $c_2$, $c_3$, \dots. Therefore, the problem is fully resolved.

Another way of solving Carath\'eodory's coefficient problem leads to trigonometric moment problems. To see this, one needs to take into account that Carath\'{e}odory functions admit the representation \cite[Chapter 3, Section 1]{A65}
\begin{equation} \label{IntCara} 
F(z) = \int_0^{2\pi} \frac{e^{i\theta}+z}{e^{i\theta}-z}\, d\mu(\theta)
\end{equation} 
for some non-trivial (that is, infinitely supported) probability measure $\mu$. Next, after combining \eqref{CarCP} and \eqref{IntCara} one can see that the Carath\'eodory coefficient problem reads: find necessary and sufficient conditions on the coefficients $c_1$, $c_2$, $c_3$, $\dots$ in order that there exists a probability measure $\mu$ such that
\[
c_n=\int_0^{2\pi} e^{-in\theta}\, d\mu(\theta), \quad n=1,2,3,\dots.
\]
Once we have a moment problem it seems natural to consider orthogonal polynomials since they play a prominent role for the analogous moment problems on the real line, which will be discussed later. So, given a moment sequence $c_1$, $c_2$, $c_3$, $\dots$ one can define a sequence of monic polynomials by the formulas
\[
\Phi_n(z)=\frac{1}{D_{n-1}}\begin{vmatrix}
c_0&\overline{c}_1&\dots&\overline{c}_n\\
c_1&{c}_0&\dots&\overline{c}_{n-1}\\
\vdots&\vdots&&\vdots\\
c_{n-1}&{c}_{n-2}&\dots&\overline{c}_1\\
1&z&\dots&z^n\\
\end{vmatrix}, \quad n=0,1,2\dots,
\]
where $c_0=1$, $D_{-1}=1$, and $D_{n-1}=\det(c_{k-j})_{k,j=0}^{n-1}$ with $c_{-j}=\overline{c}_j$ for $j=1,2,3\dots$. Evidently, $\Phi_n$ is correctly defined if and only if $D_{n-1}\ne 0$, which is a step towards the condition we are looking for. 
In addition, if $D_{n-1}\ne 0$ for $n=1,2,\dots$ then the polynomials $\Phi_n$ satisfy the Szeg\H{o} recurrence \cite{Ger40}:
\begin{equation}\label{SzRec}
\begin{split}
\Phi_{n+1}(z)=z\Phi_n(z)-\overline{\alpha}_n\Phi_n^*(z)\\
\Phi_{n+1}^*(z)=\Phi_n^*(z)-\alpha_nz\Phi_n(z),
\end{split}
\end{equation}
where $\Phi_0=1$ and $\Phi_n^*$ is the polynomial reversed to $\Phi_n$, that is,
\begin{equation}\label{RevPol}
\Phi_n^*(z)=z^n\overline{\Phi_n(1/\overline{z})}.
\end{equation}
Finally, according to Favard's theorem on the unit circle (for example, see \cite{ENZG91}) the polynomials $\Phi_n$ are orthogonal with respect to a positive measure supported on the unit circle $\dT$ if and only if the coefficients $\alpha_n$ called Verblunsky coefficients satisfy
\begin{equation}\label{VerbCond}
|\alpha_n|<1, \quad n=0, 1, 2, \dots,
\end{equation}
which delivers the condition we wanted to get to resolve the Carath\'eodory coefficient problem. The latter condition came into play in a way different from using the Schur algorithm but \eqref{VerbCond} is actually the same as \eqref{SchurCond}  if we take into account the Geronimus theorem \cite[Theorem 3.1.4]{OPUC1}:
\[
\alpha_n=\gamma_n,\quad n=0,1,2,\dots.
\]
Thus, we are back to the Schur algorithm. As a result, one sees that OPUC are associated with solving the interpolation problem in the class of Carath\'eodory functions and, consequently, they absorb the information about the interpolation.

To close up this discussion we should consider the real line case that lies within the same circle of concepts. Let us begin with a Hamburger moment problem \cite[Chapter 2, Section 1]{A65}: given an infinite sequence of real numbers $s_0=1$, $s_1$, $s_2$, \dots; it is required to find a probability measure $\sigma$ supported on the real line $\dR$ such that
\begin{equation}\label{Hmp}
s_n=\int_{\dR} t^{n}\, d\sigma(t), \quad n=1,2,3,\dots.
\end{equation}
This problem is not always solvable and therefore we need to discuss the existence criterion, which will be done through the use of OPRL. To this end, introduce the polynomials
\[
P_n(\lambda)=\frac{1}{\sqrt{|\Delta_n\Delta_{n-1}|}}\begin{vmatrix}
s_0&{s}_1&\dots&{s}_n\\
s_1&{s}_2&\dots&{s}_{n+1}\\
\vdots&\vdots&&\vdots\\
s_{n-1}&{s}_{n}&\dots&s_{2n-1}\\
1&\lambda&\dots&\lambda^n\\
\end{vmatrix}, \quad n=0,1,2\dots,
\]
where $\lambda$ is in the upper half-plane $\dC_+$, $\Delta_{-1}=1$, and $\Delta_n=\det(s_{k+j})_{k,j=0}^{n}$. These polynomials are correctly defined provided that $\Delta_n\ne0$ for $n=1,2,3,\dots$ and, in this case, are orthogonal with respect to a quasi-definite moment functional, which implies that they satisfy three-terms recurrence relations \cite[Chapter 1]{Chi78}:
\begin{equation}\label{qOPrelations}
\lambda P_{n}(\lambda)=b_{n}P_{n+1}(\lambda)+a_nP_n(\lambda)+\epsilon_{n-1}b_{n-1}P_{n-1}(\lambda), \quad n=0,1,2,\dots
\end{equation}  
where $b_{-1}=0$, $\epsilon=\pm 1$, $b_n>0$ and $a_n\in\dR$. Then in this context the Favard theorem reads that there exists a measure $\sigma$ satisfying \eqref{Hmp} if and only if $\epsilon_n=1$ for all nonnegative integers $n$ \cite[Chapter I, Theorem 4.4]{Chi78}. While we are on the subject, it is worth mentioning that in the latter case \eqref{qOPrelations} can be rewritten by means of a symmetric tridiagonal matrix called a Jacobi matrix
\[
\begin{pmatrix}
  a_0 & b_0&  &\\
  b_0& a_1& b_1&\\
     & b_1&a_2&\\
     &&&\ddots&    
      \end{pmatrix}
      \begin{pmatrix}
      P_0\\
      P_1\\
      P_2\\
      \vdots
      \end{pmatrix}=\lambda \begin{pmatrix}
      P_0\\
      P_1\\
      P_2\\
      \vdots
      \end{pmatrix}.
\]
It is noteworthy that such an explicit appearance of Jacobi matrices here is strikingly different to the unit circle case for which it took many years and papers to create a proper analog of Jacobi matrices (for details see \cite[Chapter 4]{OPUC1}).  

What concerns the underlying interpolation problems, one has to recall that a Nevanlinna function $\varphi$ is an analytic function on $\dC_+$ which satisfies
\[
\Im \varphi(\lambda)>0, \quad \lambda\in\dC_+.
\]
Next, it is not so hard to check that if $\sigma$ is a positive measure on $\dR$ then the function 
\[
\varphi(\lambda)=\int_{\dR}\frac{d\sigma(t)}{t-\lambda}
\]
is a Nevanlinna function and 
\begin{equation}\label{NevHam}
\varphi(\lambda)=-\frac{s_0}{\lambda}-\frac{s_1}{\lambda^2}-\dots-\frac{s_{2n}}{\lambda^{2n+1}}+
o\left(\frac{1}{\lambda^{2n+1}}\right), \quad \lambda=iy, \quad y\to\infty,
\end{equation} 
for any $n$. Moreover, the Hamburger-Nevanlinna theorem (see \cite[Chapter 3, Section 2]{A65} or \cite[Proposition 4.13]{Simon98}) says that the classical Hamburger moment problem is equivalent to finding a Nevanlinna function with the property \eqref{NevHam} for all nonnegative integer $n$. To solve this equivalent problem one can apply a step-by-step algorithm similar to the Schur algorithm, which for a given $\varphi_0=\varphi$ gives a sequence of Nevanlinna functions 
\[
\varphi_j(\lambda)=-\frac{1}{\lambda-a_j+b_j^2\varphi_{j+1}(\lambda)}, \quad j=0,1,2 \dots,
\]
where $a_j$ and $b_j$ are the same coefficients as in \eqref{qOPrelations}. Furthermore, as one can see the algorithm in the real line case is a straightforward generalization of Euclid's algorithm to the case of formal Laurent series \cite[Section 5.1]{JT}. Consequently, it leads to a continued fraction
\begin{equation}\label{Jfraction}
\varphi(\lambda)\sim-\frac{1}{\lambda-a_0-\displaystyle{\frac{b_0^2}
{\lambda-a_1-\displaystyle{\frac{b_1^2}{\ddots}}}}}=-\cfr{1}{\lambda-a_0}-
\cfr{b_0^2}{\lambda-a_{1}}-
\cfr{b_1^2}{\lambda-a_{2}}-\dots
\end{equation}
which generates the relation \eqref{qOPrelations} in the standard way \cite[Chapter 1, Section 4]{A65}, \cite[Section 5]{Simon98}. By the way, in what follows we will be using the second form of representing continued fractions since it makes formulas more transparent and shorter. 
 
Summing up we see that we have arrived at the desired interpolation problem and the step-by-step algorithm in the real line case. However, this time the interpolation is at $\infty$ (see \cite[Chapter 3, Section 3.6]{A65}) unlike the unit circle case when the corresponding problem is the multiple interpolation at $0$. This means that although the two problems look somewhat similar, they are different in nature. Indeed, the unit circle case concerns the multiple interpolation at $0$, which belongs to the domain of analyticity of Carath\'eodory functions. But the real line case deals with the multiple interpolation at $\infty$, which belongs to the boundary of the domain of analyticity of Nevanlinna functions. That is, a Nevanlinna function does not have to be analytic at $\infty$ and thus we cannot apply the uniqueness theorem. In turn, this entails that there might be many Nevanlinna functions satisfying \eqref{NevHam}, which leads to the theory of extensions of symmetric Jacobi operators to self-adjoint ones \cite{A65}, \cite{Simon98}. 
To conclude this section let us formulate the following statement, which is well known but perhaps was never worded exactly this way.

\begin{proposition}
The trigonometric and Hamburger moment problems are representatives of a class of interpolation problems, which are called Nevanlinna-Pick problems. Moreover, there are Hamburger moment problems that cannot be restated in the form of trigonometric moment problems. In other words, the OPRL theory is not equivalent to the OPUC theory. 
\end{proposition}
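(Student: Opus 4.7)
The proposition has three assertions: (i) both moment problems belong to the Nevanlinna--Pick class of interpolation problems, (ii) some Hamburger moment problem cannot be reformulated as a trigonometric one, and (iii) OPRL and OPUC theories are not equivalent. The plan is to read (i) off the identifications already set up in the text, prove (ii) by contrasting determinacy of the two interpolation problems, and then obtain (iii) as an immediate corollary.

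For (i), I would combine the integral representation \eqref{IntCara} with the expansion \eqref{CarCP} to recast the trigonometric moment problem as the task of interpolating a Carath\'eodory function at the interior point $z=0$ of $\dD$ with prescribed Taylor coefficients. Similarly, the Hamburger--Nevanlinna theorem together with the asymptotic expansion \eqref{NevHam} rephrases the Hamburger moment problem as interpolating a Nevanlinna function at the boundary point $\lambda=\infty$ of $\dC_+$ with prescribed asymptotic data. Both are then instances of the general Nevanlinna--Pick scheme, one with an interior node and one with a boundary node.

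For (ii), the structural distinction is \emph{determinacy}. Every trigonometric moment problem is determinate: two Carath\'eodory functions sharing the expansion \eqref{CarCP} agree to infinite order at $z=0$, an interior point of $\dD$, hence coincide by the uniqueness theorem for analytic functions, and \eqref{IntCara} then recovers a single probability measure $\mu$ on $\dT$. By contrast, the classical theory provides \emph{indeterminate} Hamburger moment problems; a standard example is the log-normal sequence
\[
s_n = \int_{0}^{\infty} t^n \, \frac{e^{-(\log t)^{2}/2}}{t\sqrt{2\pi}} \, dt, \quad n=0,1,2,\dots,
\]
which admits infinitely many distinct probability representing measures on $\dR$ (see \cite[Chapter 2]{A65}). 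If such a Hamburger problem could be restated as a trigonometric moment problem, the latter would be determinate and thus force uniqueness of the measure, a contradiction. Assertion (iii) then follows at once: Verblunsky coefficients encode one and only one nontrivial probability measure on $\dT$, whereas in the indeterminate case the Jacobi data fail to determine $\sigma$ uniquely, so no spectrum-preserving dictionary can be set up between OPUC and OPRL.

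The main point of the argument, and the only delicate place, is exhibiting (or citing) an indeterminate Hamburger moment problem; everything else is the interior-versus-boundary dichotomy already implicit in the discussion preceding the proposition.
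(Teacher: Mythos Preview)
Your proposal is correct and mirrors the paper's own argument, which is not given as a separate proof block but is laid out in the discussion of Section~2 immediately preceding the proposition: both problems are identified as Nevanlinna--Pick interpolations, and the interior-versus-boundary dichotomy (interpolation at $0\in\dD$ versus at $\infty\in\partial\dC_+$) is invoked to explain why the trigonometric problem is always determinate while the Hamburger problem need not be. The only difference is cosmetic: where the paper gestures at the existence of indeterminate Hamburger problems by citing the extension theory of symmetric Jacobi operators \cite{A65}, \cite{Simon98}, you supply the explicit log-normal example, which is a welcome concretization but not a different idea.
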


\section{Revisiting the Wall ideas}

As explained in the previous section, the theories of OPUC and OPRL do not line up completely since they correspond to different kinds of interpolation problems. So, one can ask a few natural questions. For instance, what would be the theory corresponding to OPUC on the real line? One of the answers to the question is given by the Szeg\H{o} mapping \cite[Section 13.1]{OPUC2}. However, this answer is not entirely natural for the corresponding interpolation problems. As is known, a simple transformation establishes a one-to-one correspondence between Carath\'eodory and Nevanlinna functions. More precisely, it is clear that if $F$ is a Carath\'eodory function then the function
\[
\varphi(\lambda)=iF(z), \quad z=\frac{i-\lambda}{i+\lambda},
\]
is a Nevanlinna function and in view of this transformation there is a natural relation between interpolation problems in the classes of Carath\'eodory and Nevanlinna functions. Besides, the Cayley transform is also natural for relating unitary and self-adjoint operators. Hence, an instinctive way to answer the question would be through the use of the Cayley transform. It appears that this scheme was realized by H. S. Wall \cite{Wall44}, \cite{Wall46} (see also \cite{Wall48}).  Actually, H. S. Wall developed the idea of representing Schur, Carath\'eodory, and Nevanlinna functions by means of continued fractions and the starting point of that study was the classical Schur algorithm. In this section we reframe the core that lies behind Wall's representations and tailor it to our further needs. 

Let us start by noticing that the first obvious discrepancy between OPUC and OPRL is that the sequence of transformations \eqref{SchurAlg} is not a continued fraction contrary to the real line case. Nevertheless, the first step that was done by H. S. Wall is the observation  
\[
f_n(z)=\frac{\gamma_n + zf_{n+1}(z)}{1+\bar\gamma_n zf_{n+1}(z)}
=\gamma_n+\frac{(1-|\gamma_n|^2)z}{\displaystyle{\overline{\gamma}_nz+\frac{1}{f_{n+1}(z)}}}.
\]
Clearly, such representations can be combined into the following expansion
\begin{equation}\label{WF}
f(z)\sim\gamma_0+\cfr{(1-|\gamma_0|^2)z}{\overline{\gamma}_0z}+
\cfr{1}{\gamma_1}+
\cfr{(1-|\gamma_1|^2)z}{\overline{\gamma}_1z}+\dots.
\end{equation}
Let us stress here again that the structure of this fraction and, hence, the underlying recurrence relations are not suitable for having any operator interpretations. So, to speculate one may say that after ending up \eqref{WF} H. S. Wall decided to see if it was possible to do any better for another class of analytic functions. As we saw already the next one in line would be the class of Carath\'eodory functions. 

In order to get Wall's representation of Carath\'eodory functions it will be convenient to represent linear fractional transformations using $2\times 2$ matrices. Namely, we can follow the notation from \cite[page 33]{OPUC1} and rewrite \eqref{SchurAlg} in the following manner
\begin{equation}\label{MatSA}
\begin{pmatrix}
f_n(z)\\
1
\end{pmatrix}\stackrel{.}{=}
\begin{pmatrix}
z&\gamma_n\\
\overline{\gamma}_nz&1
\end{pmatrix}
\begin{pmatrix}
f_{n+1}(z)\\
1
\end{pmatrix},
\end{equation}
where the symbol $\stackrel{.}{=}$ is used in the sense that
\[
\begin{pmatrix}
a\\
b
\end{pmatrix}\stackrel{.}{=}
\begin{pmatrix}
c\\
d
\end{pmatrix} \Leftrightarrow \frac{a}{b}=\frac{c}{d}.
\]
Next step, is to use the Wall Ansatz, which consists in introducing the function
\begin{equation}\label{WallAnsatz}
\begin{pmatrix}
h_n(z)\\
1
\end{pmatrix}\stackrel{.}{=}
\begin{pmatrix}
-\delta_n&1\\
{\delta}_nz&1
\end{pmatrix}
\begin{pmatrix}
f_{n}(z)\\
1
\end{pmatrix},
\end{equation} 
where the coefficients $\delta_n$ are given by the recurrence relations
\[
\delta_0=1, \quad \delta_n=\frac{\overline{\gamma}_{k-1}-\delta_{k-1}}{1-\gamma_{k-1}\delta_{k-1}}, \quad k=1,2, \dots n.
\] 
Then \eqref{MatSA} becomes
\[
\begin{pmatrix}
h_n(z)\\
1
\end{pmatrix}\stackrel{.}{=}
\begin{pmatrix}
-\delta_n&1\\
{\delta}_nz&1
\end{pmatrix}
\begin{pmatrix}
z&\gamma_n\\
\overline{\gamma}_nz&1
\end{pmatrix}
\begin{pmatrix}
-\delta_{n+1}&1\\
{\delta}_{n+1}z&1
\end{pmatrix}^{-1}
\begin{pmatrix}
h_{n+1}(z)\\
1
\end{pmatrix},
\]
which reduces to
\begin{equation}\label{HalfTr}
\begin{pmatrix}
h_n(z)\\
1
\end{pmatrix}\stackrel{.}{=}
\begin{pmatrix}
0& (z+1)(\delta_n-\overline{\gamma}_n)\\
z(z+1)\frac{\delta_n(1-|\gamma_n|^2)}{1-\gamma_n\delta_n}& \delta_n(z+1)\left(\frac{1-\overline{\gamma}_n\overline{\delta}_n}{1-\gamma_n\delta_n}-z\right)
\end{pmatrix}
\begin{pmatrix}
h_{n+1}(z)\\
1
\end{pmatrix}.
\end{equation}
Since the $(1,1)$-entry of the $2\times 2$ matrix in \eqref{HalfTr} is $0$, the corresponding transformations obviously lead to a continued fraction but before writing it down we can simplify it. To this end, let us notice that due to the definition of $\stackrel{.}{=}$, multiplying each of the entries of the $2\times 2$ matrix by the same non-vanishing expression gives a relation equivalent to the original one. Particularly, if we multiply the matrix by $\overline{\delta}_n(1-\gamma_n\delta_n)/(z+1)$ and take into account that $|\delta_n|=1$, we get
 \begin{equation}\label{MainMTr}
\begin{pmatrix}
h_n(z)\\
1
\end{pmatrix}\stackrel{.}{=}
\begin{pmatrix}
0& |1-\gamma_n\delta_n|^2\\
z{(1-|\gamma_n\delta_n|^2)}& (1-\overline{\gamma}_n\overline{\delta}_n)-(1-\gamma_n\delta_n)z
\end{pmatrix}
\begin{pmatrix}
h_{n+1}(z)\\
1
\end{pmatrix}. 
\end{equation}
Still, the elements of the matrix in the latter relation looks a bit heavy and it's possible to make them easier as was done by H. S. Wall. So, let us introduce two sequence of numbers
\begin{equation}\label{gr}
g_{n+1}=\frac{|1-\gamma_n\delta_n|^2}{2\Rl(1-\gamma_n\delta_n)},\quad
r_{n+1}=-\frac{\Im(1-\gamma_n\delta_n)}{\Rl(1-\gamma_n\delta_n)}, \quad n=0, 1, 2, \dots
\end{equation}
and then divide the $2\times 2$ matrix in \eqref{MainMTr} by $\Rl(1-\gamma_n\delta_n)>0$. This manipulation leads to the equivalent representation of \eqref{MainMTr} 
\begin{equation}\label{MainMTrgr}
\begin{pmatrix}
h_n(z)\\
1
\end{pmatrix}\stackrel{.}{=}
\begin{pmatrix}
0& 2g_{n+1}\\
2(1-g_{n+1})z& (1+ir_{n+1})-(1-ir_{n+1})z
\end{pmatrix}
\begin{pmatrix}
h_{n+1}(z)\\
1
\end{pmatrix}. 
\end{equation}
Next, one can observe that \eqref{gr}, \eqref{SchurCond}, and $|\delta_k|=1$ imply 
\begin{equation}\label{grCond}
0<g_k<1,\quad -\infty<r_k<+\infty, \quad k=1,2,3,\dots.
\end{equation}
Indeed, to see the validity of the first one we need to notice that $\Rl(1-\gamma_n\delta_n)>0$ and 
\[
\begin{split}
1>&|\gamma_n\delta_n|^2=|1-(1-\gamma_n\delta_n)|^2=
\\&1-2\Rl(1-\gamma_n\delta_n)+\Rl^2(1-\gamma_n\delta_n)+\Im^2(1-\gamma_n\delta_n)=
\\&1-2\Rl(1-\gamma_n\delta_n)+|1-\gamma_n\delta_n|^2.
\end{split}
\]
The second inequality \eqref{grCond} is obvious. Now, everything is clean and we 
can consecutively apply the linear fractional transformations \eqref{MainMTrgr}, which eventually gives a continued fraction expansion of $h_0$ 
\begin{equation}\label{CFforh_0}
h_0(z)=\frac{1-f(z)}{1+zf(z)}\sim \cfr{2g_1z}{(1+ir_{1})-(1-ir_{1})z}+
\cfr{4(1-g_{1})g_2z}{(1+ir_{2})-(1-ir_{2})z}+
\dots.
\end{equation}
The structure of the latter continued fraction resembles the continued fraction derived by Ya. L. Geronimus \cite{Ger41} but Geronimus' fraction is different. Besides, the continued fraction \eqref{CFforh_0} is implicitly present in \cite{DG}, where a tridiagonal approach to OPUC was developed in the relation to the Bistritz test and the split Levinson algorithm. 

Switching to the original goal, which is to get a continued fraction representation of Carath\'eodory functions, one can see that in general $h_0$ does not belong to the class of Carath\'eodory functions. Nevertheless, a simple linear fractional transformation sends $h_0$ to $F$ defined by \eqref{Carathe}
\[
F(z)=\frac{1+z}{1-z+2z h_0(z)}.
\]
Therefore, we arrive at the following representation of $F$
\begin{equation}\label{CarFrac}
\Scale[1.05]{F(z)\sim \cfr{1+z}{1-z}+\cfr{4g_1z}{(1+ir_{1})-(1-ir_{1})z}+
\cfr{4(1-g_{1})g_2z}{(1+ir_{2})-(1-ir_{2})z}+
\cfr{4(1-g_{2})g_3z}{(1+ir_{3})-(1-ir_{3})z}+\dots}.
\end{equation}
Besides, the continued fraction \eqref{CarFrac} converges locally uniformly in $\dD$ \cite[(iv) on page 292]{Wall48}, which is essentially a consequence of a convergence result regarding to \eqref{WF} (see \cite[Theorem 77.1]{Wall48}).

Finally, we are in the position to formulate a partial answer to the question posed at the beginning of this section.
\begin{theorem}[Wall's theorem] \label{WallTf} Let $F$ be a Carath\'eodory function corresponding to the Schur parameters $\gamma_0$, $\gamma_1$, $\gamma_2$, \dots. Then the Nevanlinna function $\varphi$ defined via the relation
\begin{equation}\label{CayleyT}
\varphi(\lambda)=iF(z), \quad z=\frac{i-\lambda}{i+\lambda}
\end{equation}
can also be generated with the help of the following continued fraction
\begin{equation}\label{ThforN}
\varphi(\lambda)\sim
-\cfr{1}{\lambda}-\cfr{g_1(\lambda^2+1)}{\lambda-r_{1}}-
\cfr{(1-g_{1})g_2(\lambda^2+1)}{\lambda-r_{2}}-
\cfr{(1-g_{2})g_3(\lambda^2+1)}{\lambda-r_{3}}-\dots,
\end{equation} 
where the numbers $g_k$ and $r_k$ are defined by \eqref{gr}. Conversely, any two sequences of numbers $g_k$ and $r_k$ that obey \eqref{grCond} produce a Nevannlina function $\varphi$ normalized by the condition $\varphi(i)=i$ through \eqref{ThforN}.  
In this case, the Schur parameters of the Carath\'eodory function $F$ defined by \eqref{CayleyT} can be recovered in the following two steps:
\[
u_k:=1-\gamma_k\delta_k=\frac{2g_{k+1}}{1+r_{k+1}^2}-\frac{2g_{k+1}r_{k+1}}{1+r_{k+1}^2}i
\]
and then since $\delta_0=1$ we have 
\[
\gamma_0=1-u_0, \quad \gamma_{k+1}=\frac{u_0u_1\dots u_k}{\overline{u}_0\overline{u}_1\dots \overline{u}_k}(1-u_{k+1}),\quad k=0, 1, 2, \dots.
\]
\end{theorem}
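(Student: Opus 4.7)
The plan is to obtain the forward direction by direct substitution in the continued fraction representation of $h_0(z)$ that is already built by the matrix recursion \eqref{MainMTrgr}, and to get the converse and the recovery formulas by inverting the definitions of $g_k,r_k,u_k,\delta_k$.

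For the forward direction, I would first express $\varphi$ in terms of $h_0$ after the Cayley substitution: since $z=(i-\lambda)/(i+\lambda)$ gives $1+z=2i/(i+\lambda)$ and $1-z=2\lambda/(i+\lambda)$, the formula $F(z)=(1+z)/(1-z+2zh_0(z))$ simplifies to
\[
\varphi(\lambda)=iF(z)=-\frac{1}{\lambda+(i-\lambda)h_0(z)}.
\]
Next I would transport \eqref{MainMTrgr} into the $\lambda$-variable via the two identities $(1+ir_n)-(1-ir_n)z=2(\lambda-r_n)/(i+\lambda)$ and $(i-\lambda)(i+\lambda)=-(\lambda^2+1)$. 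A single equivalence transformation of the CF (multiply numerator and denominator at each level by $(i+\lambda)/2$) turns the recurrence for $h_n$ into
\[
h_n=\frac{g_{n+1}(i+\lambda)}{(\lambda-r_{n+1})+(1-g_{n+1})(i-\lambda)h_{n+1}},
\]
so the $(i-\lambda)$ at level $n$ meets the $(i+\lambda)$ in the numerator at level $n+1$ to produce the desired $-(\lambda^2+1)$ factors. Iterating this down the fraction and inserting the result into the formula $\varphi=-1/(\lambda+(i-\lambda)h_0)$ reproduces \eqref{ThforN} literally; convergence in $\dC_+$ is inherited from the locally uniform convergence of \eqref{CarFrac} in $\dD$ via the Cayley map.

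For the converse, given $g_k,r_k$ obeying \eqref{grCond}, I would solve \eqref{gr} for $u_k:=1-\gamma_k\delta_k$: a short calculation yields $u_k=2g_{k+1}/(1+ir_{k+1})$, which after rationalizing matches the first displayed recovery formula. Starting from $\delta_0=1$ and using $\gamma_{k-1}\delta_{k-1}=1-u_{k-1}$ together with $|\delta_{k-1}|=1$ (so that $\overline{\gamma}_{k-1}=\delta_{k-1}(1-\overline{u}_{k-1})$), the recurrence $\delta_k=(\overline{\gamma}_{k-1}-\delta_{k-1})/(1-\gamma_{k-1}\delta_{k-1})$ collapses to $\delta_k=-\overline{u}_{k-1}\delta_{k-1}/u_{k-1}$; iterating gives $\delta_k$ as a unimodular product of ratios $\overline{u}_j/u_j$, and plugging into $\gamma_k=(1-u_k)\overline{\delta}_k$ produces the stated closed form. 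The inequalities \eqref{grCond} translate into $|\gamma_k|<1$, so these Schur parameters define a Carath\'eodory function $F$ whose associated $\varphi$ has the CF \eqref{ThforN} by the forward direction, and the normalization $\varphi(i)=iF(0)=i$ is immediate from \eqref{CarCond} since $z=0$ when $\lambda=i$.

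The main obstacle is just the bookkeeping in the continued fraction equivalence transformation, specifically tracking how the $(i\pm\lambda)$ factors introduced by the Cayley substitution combine across successive levels into the $(\lambda^2+1)$ numerators of \eqref{ThforN}. Beyond that the argument is routine, since all the structural work was already carried out in the derivation of \eqref{MainMTrgr} in Section~3.
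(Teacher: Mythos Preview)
Your approach is essentially the paper's own proof spelled out in detail: the paper also obtains \eqref{ThforN} by direct substitution of the Cayley transform into \eqref{CarFrac}, invokes the cited convergence result, and leaves the recovery formulas to ``simple algebraic manipulations,'' which is exactly the inversion of \eqref{gr} and the $\delta_k$-recursion that you carry out. One bookkeeping caveat: your recursion $\delta_k=-(\overline{u}_{k-1}/u_{k-1})\,\delta_{k-1}$ iterates to $\delta_{k+1}=(-1)^{k+1}\overline{u}_0\cdots\overline{u}_k/(u_0\cdots u_k)$, so plugging into $\gamma_{k+1}=(1-u_{k+1})\overline{\delta}_{k+1}$ gives the stated product times $(-1)^{k+1}$; this is either a sign slip in your last step or a typo in the displayed formula of the statement, but it does not affect the method.
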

\begin{proof}
To get \eqref{ThforN} from \eqref{CarFrac} is easy. It is just the straightforward substitution of \eqref{CayleyT} into
\eqref{CarFrac}. The rest is a consequence of the convergence result that was mentioned above and simple algebraic manipulations. 
\end{proof}
The reader who is familiar with the types of continued fractions can easily recognize a Thiele fraction in \eqref{ThforN} \cite[Appendix A]{JT}. More precisely, it looks like an even or odd part of a Thiele fraction (see \cite[Sections 2.4.2 and 2.4.3]{JT} for the definitions of even and odd parts of a continued fraction) and more details abou the relation between \eqref{ThforN} and Thiele fractions can be found in \cite{STZh}. In fact, Thiele fractions are associated with an interpolating process. Hence, it is quite natural that we got them as we are solving the interpolation problem, which consists in finding necessary and sufficient conditions on the Taylor coefficients of $\varphi$ at $i$ in order that $\Im\varphi(\lambda) >0$ for $\lambda\in\dC_+$. Saying it differently, \eqref{ThforN} is basically the image of the Schur algorithm under the Cayley transform. 

At the same time, \eqref{ThforN} is a particular case of continued fractions of type $R_{II}$ that were introduced by M. Ismail and D. Masson \cite{IM95} and were shown to generate biorthogonal rational functions. That is, we have some orthogonality behind the scene here but we can also see this in a different way since in a sense the original object is OPUC.
 
\begin{corollary}[Wall's characterization]\label{CharNev}
There is a bijection between pairs of infinite sequence with the property \eqref{grCond} and Nevanlinna functions $\varphi$ normalized by the condition $\varphi(i)=i$ provided that we also consider finite sequences where the last $g_n$ is equal to $1$. In case we have infinitely many $g_k$, the continued fraction \eqref{ThforN} converges locally uniformly in $\dC_+$.  
\end{corollary}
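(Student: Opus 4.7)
The plan is to build the claimed bijection as a composition of three well-established bijections and then inherit the convergence statement from \eqref{CarFrac}. Concretely: (i) Nevanlinna functions $\varphi$ normalized by $\varphi(i)=i$ are in bijective correspondence with Carath\'eodory functions $F$ through \eqref{CayleyT}, since $\varphi(i)=iF(0)$ forces $F(0)=1$ and the Cayley transform is a biholomorphism $\dC_+\to\dD$; (ii) Carath\'eodory functions are in bijective correspondence with Schur functions via \eqref{Carathe}, and a Schur function producing an infinite sequence of Schur parameters corresponds bijectively to an infinite sequence $\{\gamma_n\}$ with $|\gamma_n|<1$, which is the classical content of the Schur algorithm; (iii) the assignment $\{\gamma_n\}\leftrightarrow\{(g_n,r_n)\}$ constructed in Section 3 is itself bijective. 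Composing these three yields the desired bijection, so the only genuinely new point is (iii).

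For the forward direction of (iii), the auxiliary sequence $\delta_n$ is defined by $\delta_0=1$ and $\delta_k=(\overline{\gamma}_{k-1}-\delta_{k-1})/(1-\gamma_{k-1}\delta_{k-1})$, which is well defined because $|\gamma_{k-1}\delta_{k-1}|<1$; a direct computation (or the fact that this is a M\"obius map of the unit circle to itself) shows $|\delta_k|=1$ for all $k$, and then \eqref{gr} produces $(g_k,r_k)$ obeying \eqref{grCond} by the identity already verified in the text. For the inverse direction, Theorem \ref{WallTf} supplies explicit formulas: given $(g_k,r_k)$ with \eqref{grCond}, first set $u_k=\tfrac{2g_{k+1}}{1+r_{k+1}^2}(1-ir_{k+1})$, and then recover $\delta_k$ (starting from $\delta_0=1$) and $\gamma_k$ from the recursive formulas for $\gamma_0$ and $\gamma_{k+1}$ in the theorem. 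One then has to verify that the resulting $\gamma_k$ satisfy $|\gamma_k|<1$; this is exactly the reverse of the short identity
\[
|\gamma_k\delta_k|^2 = 1-2\Rl(1-\gamma_k\delta_k)+|1-\gamma_k\delta_k|^2
\]
used in the text, because $0<g_{k+1}<1$ is equivalent to $|\gamma_k\delta_k|<1$, and since $|\delta_k|=1$ this is the same as $|\gamma_k|<1$.

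The finite case is handled by the same identity. If the Schur algorithm terminates at step $n$, i.e.\ $|\gamma_j|<1$ for $j<n$ and $|\gamma_n|=1$, then $|\gamma_n\delta_n|=1$, which forces $g_{n+1}=1$; consequently $1-g_{n+1}=0$ annihilates the next numerator of \eqref{ThforN}, so the continued fraction truncates exactly at the term whose last partial numerator involves $g_{n+1}=1$. This matches the stated convention. For convergence in the infinite case, \eqref{CarFrac} converges locally uniformly in $\dD$ by Wall's result cited after \eqref{CarFrac}; since \eqref{ThforN} is obtained from \eqref{CarFrac} by the Cayley substitution $z=(i-\lambda)/(i+\lambda)$, which is a biholomorphism $\dC_+\to\dD$, local uniform convergence is transported to $\dC_+$.

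The main obstacle is bookkeeping rather than substance: one must check that the inverse formulas of Theorem \ref{WallTf} do in fact produce $\delta_k$ on the unit circle at every step (so that $|\gamma_k|<1$ genuinely follows from $0<g_{k+1}<1$), and that the truncation index in the finite case agrees with the stated ``last $g_n=1$'' convention after the inevitable reindexing between the $\gamma_n$ and $(g_{n+1},r_{n+1})$.
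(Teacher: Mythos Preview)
Your proposal is correct and follows essentially the same approach as the paper's proof, which simply cites Theorem~\ref{WallTf} together with the classical bijection between Schur functions and Schur parameter sequences. You have merely unpacked these citations into the explicit chain of bijections (Nevanlinna $\leftrightarrow$ Carath\'eodory $\leftrightarrow$ Schur $\leftrightarrow$ parameters $\leftrightarrow$ $(g_k,r_k)$) and spelled out the termination and convergence transport, which the paper leaves implicit.
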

\begin{proof}
The statement is a consequence of Theorem \ref{WallTf} and the corresponding result for Schur parameters \cite{DK03} (see also \cite[Theorem 3.1.3]{OPUC1}).
\end{proof}
\begin{remark} Almost everything from this section is present in \cite{Wall44} or \cite{Wall46} in one way or another. Besides, the corresponding material was also included to the book \cite[Sections 77 and 78]{Wall48}. One of a few modifications done here is the direct use of \eqref{CayleyT}. As a matter of fact, H. S. Wall didn't obtain \eqref{ThforN} immediately from \eqref{CarFrac} and rather used an intermediate class of analytic functions (see \cite[Theorem 78.1]{Wall48}) in order to get his characterization of Nevanlinna functions. Nevertheless, the combination of his steps gives exactly the Cayley transform \eqref{CayleyT}.
\end{remark}

\section{Approximants to the Wall continued fractions}

It is well known that OPRL appear as denominators of approximants to $J$-fractions \eqref{Jfraction} (details can be found in \cite{A65} or \cite{Simon98}). So, in this section we are going to explore the approximants to the continued fraction \eqref{ThforN}, which does resemble \eqref{Jfraction} but corresponds to the multiple interpolation at $i$ instead of the multiple interpolation at $\infty$. 

To begin with, note that we know that  combining the first $n+1$ iterates \eqref{SchurAlg} leads to the following representation of the function $f=f_0$ 
\begin{equation}\label{nIterations}
f(z)=\frac{A_{n}(z)+zB_{n}^*(z)f_{n+1}(z)}{B_{n}(z)+zA_{n}^*(z)f_{n+1}(z)},
\end{equation}
where $A_{n}$, $B_{n}$ are polynomials called Wall polynomials, $A_{n}^*$, $B_{n}^*$ are the reversed polynomials defined by 
\[
A_n^*(z)=z^n\overline{A_n(1/\overline{z})}, \quad B_n^*(z)=z^n\overline{B_n(1/\overline{z})},
\]
and $f_{n+1}(z)$ is the $(n+1)$-th iterate of the Schur algorithm \cite[Section 1.3]{OPUC1}. 
It turns out that the Wall polynomials and the sequence $\{\Phi_n\}_{n=0}^{\infty}$ of OPUC are related via the Pint\'{e}r-Nevai formula \cite{PN}:
\begin{equation*}
\Phi_n(z)=zB_{n-1}^*(z)-A_{n-1}^*(z),\qquad \Phi_n^*(z)=B_{n-1}(z)-zA_{n-1}(z).
\end{equation*}
In fact, there are recurrence formulas for Wall polynomials which can be easily obtained from the matrix interpretation of formula \eqref{nIterations}, that is,
\[
\begin{pmatrix}
zB_n^*(z)&A_n(z)\\
zA_n^*(z)&B_n(z)
\end{pmatrix}
=\begin{pmatrix}
z&\gamma_0\\
\overline{\gamma}_0z&1
\end{pmatrix}
\begin{pmatrix}
z&\gamma_1\\
\overline{\gamma}_1z&1
\end{pmatrix}\dots
\begin{pmatrix}
z&\gamma_n\\
\overline{\gamma}_nz&1
\end{pmatrix}.
\]
Then, the next order of business will be to find the formulas for approximants to the continued fraction \eqref{CarFrac}. To this end, let us first recall that 
\[
\begin{pmatrix}
F(z)\\
1
\end{pmatrix}\stackrel{.}{=}
\begin{pmatrix}
0&1+z\\
2z&1-z
\end{pmatrix}
\begin{pmatrix}
h_{0}(z)\\
1
\end{pmatrix}.
\]
Secondly, it follows from \eqref{nIterations} that 
\[
\begin{pmatrix}
f_0(z)\\
1
\end{pmatrix}\stackrel{.}{=}\begin{pmatrix}
zB_n^*(z)&A_n(z)\\
zA_n^*(z)&B_n(z)
\end{pmatrix}\begin{pmatrix}
f_{n+1}(z)\\
1
\end{pmatrix}
\] 
Next, taking into account \eqref{WallAnsatz} the latter relation reduces to
\[
\begin{pmatrix}
h_0(z)\\
1
\end{pmatrix}\stackrel{.}{=}\begin{pmatrix}
-1&1\\
z&1
\end{pmatrix}\begin{pmatrix}
zB_n^*(z)&A_n(z)\\
zA_n^*(z)&B_n(z)
\end{pmatrix}
\begin{pmatrix}
-\delta_{n+1}&1\\
{\delta}_{n+1}z&1
\end{pmatrix}^{-1}
\begin{pmatrix}
h_{n+1}(z)\\
1
\end{pmatrix}
\]
which further gives
\[
\begin{pmatrix}
F(z)\\
1
\end{pmatrix}\stackrel{.}{=}
\begin{pmatrix}
0&1+z\\
2z&1-z
\end{pmatrix}\begin{pmatrix}
-1&1\\
z&1
\end{pmatrix}\begin{pmatrix}
zB_n^*(z)&A_n(z)\\
zA_n^*(z)&B_n(z)
\end{pmatrix}
\begin{pmatrix}
1&-1\\
-\delta_{n+1}z&-{\delta}_{n+1}
\end{pmatrix}
\begin{pmatrix}
h_{n+1}(z)\\
1
\end{pmatrix}.
\]
Now, introducing 
\[
\begin{split}
W_n(z)&=
\begin{pmatrix}
w_{1,1}^{(n)}(z)&w_{1,2}^{(n)}(z)\\
w_{2,1}^{(n)}(z)&w_{2,2}^{(n)}(z)
\end{pmatrix}
\\&:=
\begin{pmatrix}
z^2+z&1+z\\
-z&2
\end{pmatrix}\begin{pmatrix}
zB_n^*(z)&A_n(z)\\
zA_n^*(z)&B_n(z)
\end{pmatrix}
\begin{pmatrix}
1&-1\\
-\delta_{n+1}z&-{\delta}_{n+1}
\end{pmatrix}
\end{split}
\]
yields
\begin{equation}\label{nIterationsCar}
F(z)=\frac{w_{1,1}^{(n)}(z)+w_{1,2}^{(n)}(z)h_{n+1}(z)}{w_{2,1}^{(n)}(z)+w_{2,2}^{(n)}(z)h_{n+1}(z)},
\end{equation}
which is another form of the representation
 \begin{equation}\label{CarFracTrunc}
\Scale[1.05]{F(z)=\cfr{1+z}{1-z}+\cfr{4g_1z}{(1+ir_{1})-(1-ir_{1})z}+
\dots+
\cfr{4(1-g_{n})g_{n+1}z}{(1+ir_{n+1})-(1-ir_{n+1})z+2(1-g_{n+1})zh_{n+1}(z)}}.
\end{equation}
Hence, we get the formula for the approximants
\[
\frac{w_{1,1}^{(n)}(z)}{w_{2,1}^{(n)}(z)}=
\cfr{1+z}{1-z}+\cfr{4g_1z}{(1+ir_{1})-(1-ir_{1})z}+
\dots+
\cfr{4(1-g_{n})g_{n+1}z}{(1+ir_{n+1})-(1-ir_{n+1})z}
\]
by setting $h_{n+1}=0$ in \eqref{nIterationsCar} and \eqref{CarFracTrunc}.
\begin{remark} It appears that in \cite{CCSV14} the authors considered the following continued fraction 
\[
\frac{1+z-(1-z)F(z)}{2zF(z)}=h_0(z)=\cfr{2g_1}{(1+ir_{1})-(1-ir_{1})z}+
\cfr{4(1-g_{1})g_2z}{(1+ir_{2})-(1-ir_{2})z}+\dots
\]
and in subsequent papers they were developing a theory of the corresponding polynomials. Actually, as one can see from the above reasoning, such a theory is just a veiled theory of OPUC. However, this relation deserves a special attention from the point of view of spectral transformation (see \cite{Zh97} for the terminology and explanations of the importnace to the field) as $h_0$ is a spectral transformation of $F$ and vice versa. 
\end{remark}

At this point we are ready to figure out what is happening for the case of Nevanlinna functions. To do so, one has to make the Cayley transform \eqref{CayleyT}. So, after applying it, the first transformation 
\[
\begin{pmatrix}
0&1+z\\
2z&1-z
\end{pmatrix}
\] 
 reduces to
\[
\cW_0(\lambda)=\begin{pmatrix}
0&-1\\
i-\lambda&\lambda
\end{pmatrix},
\] 
where some simplifications were made in accordance with the fact that $\cW_0$ represents a linear fractional transformation. Then, the transformations \eqref{MainMTrgr} become
\[
\cW_n(\lambda)=
\begin{pmatrix}
0& g_{n}(i+\lambda)\\
(1-g_{n})(i-\lambda)& \lambda-r_{n}
\end{pmatrix}, \quad n=1,2,\dots.
\]  
Clearly, the family $\cW_n$ generates \eqref{ThforN} and also we have
\[
\begin{pmatrix}
\varphi(\lambda)\\
1
\end{pmatrix}\stackrel{.}{=}
\cW_0(\lambda)
\cW_1(\lambda)
\dots
\cW_{n}(\lambda)
\begin{pmatrix}
H_{n}(\lambda)\\
1
\end{pmatrix}, \quad n=0, 1, 2\dots,
\]
where $H_{n}(\lambda)=h_{n}(z)$ with $z=(i-\lambda)/(i+\lambda)$ or, equivalently, 
\begin{equation}\label{ThforNtrunc}
\varphi(\lambda)=
-\cfr{1}{\lambda}-\cfr{g_1(\lambda^2+1)}{\lambda-r_{1}}-
\dots-
\cfr{(1-g_{n-1})g_n(\lambda^2+1)}{\lambda-r_{n}+(1-g_{n})(i-\lambda)H_n(\lambda)}.
\end{equation} 
As a result, we have the following statement.
\begin{theorem}
The transfer matrix
\[
\cW_{[0,n]}(\lambda):=\cW_0(\lambda)
\cW_1(\lambda)
\dots
\cW_{n}(\lambda)
\] 
has the following structure
\[
\cW_{[0,n]}(\lambda)=
\begin{pmatrix}
(1-g_n)(i-\lambda)\cA_{n-1}(\lambda)&\cA_{n}(\lambda)\\
(1-g_n)(i-\lambda)\cB_{n-1}(\lambda)&\cB_{n}(\lambda)
\end{pmatrix},
\]
where the polynomials $\cA_n$ and $\cB_n$ satisfy the recurrence relations
\begin{equation}\label{MPrelations}
\begin{split}
\cA_{n}(\lambda)=(\lambda-r_n)\cA_{n-1}(\lambda)-(1-g_{n-1})g_{n}(\lambda^2+1)\cA_{n-2}(\lambda)\\
\cB_{n}(\lambda)=(\lambda-r_n)\cB_{n-1}(\lambda)-(1-g_{n-1})g_{n}(\lambda^2+1)\cB_{n-2}(\lambda)
\end{split}
\end{equation}
with the initial conditions
\[
\cA_{-1}=0, \quad \cA_{0}=-1, \quad \cB_{-1}=1, \quad \cB_{0}=\lambda.
\]
Besides, the formula
\begin{equation}\label{DmNP}
\varphi(\lambda)=\frac{(1-g_n)\cA_{n-1}(\lambda)(\tau(\lambda)^{-1}+\lambda)-\cA_{n}(\lambda)}{(1-g_n)\cB_{n-1}(\lambda)(\tau(\lambda)^{-1}+\lambda)-\cB_{n}(\lambda)}
\end{equation}
gives a description of all Nevanlinna functions $\varphi$ that have the prescribed $n+1$ Taylor coefficient at $\lambda=i$ in terms of an arbitrary Nevanlinna functions $\tau$ such that 
\begin{equation}\label{FirstI}
\varphi(i)=i.
\end{equation}
In fact, the latter condition is also the first interpolation condition because it is inherited from the unit circle case since we only consider probability measures on $\dT$.
\end{theorem}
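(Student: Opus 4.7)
The statement makes four assertions: the block form of $\cW_{[0,n]}$, the recurrences \eqref{MPrelations}, the linear-fractional parametrization \eqref{DmNP}, and the interpolation identity \eqref{FirstI}. My plan is to settle the first two by a single induction on $n$, then extract \eqref{DmNP} by direct identification from the transfer-matrix relation, and finally verify \eqref{FirstI} in one line via the Cayley transform.

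For the induction I fix the convention $g_0:=0$, since \eqref{gr} only produces $g_1,g_2,\ldots$. The base case $n=0$ then reduces to checking that $\cW_0=\begin{pmatrix}0 & -1\\ i-\lambda & \lambda\end{pmatrix}$ coincides with $\begin{pmatrix}(1-g_0)(i-\lambda)\cA_{-1} & \cA_0\\ (1-g_0)(i-\lambda)\cB_{-1} & \cB_0\end{pmatrix}$ under the initial data $\cA_{-1}=0,\ \cA_0=-1,\ \cB_{-1}=1,\ \cB_0=\lambda$. For the inductive step I multiply the claimed form of $\cW_{[0,n-1]}$ on the right by $\cW_n$. Because the $(1,1)$-entry of $\cW_n$ vanishes, the first column of the product carries the common factor $(1-g_n)(i-\lambda)$ from the lower-left entry of $\cW_n$, yielding exactly the first column of the claimed form of $\cW_{[0,n]}$ with $\cA_{n-1},\cB_{n-1}$ now playing the role that $\cA_{n-2},\cB_{n-2}$ played one step earlier. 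The second column simplifies, using $(i-\lambda)(i+\lambda)=-(1+\lambda^2)$, to $(\lambda-r_n)\cA_{n-1}-(1-g_{n-1})g_n(1+\lambda^2)\cA_{n-2}$ and the analogous expression in $\cB$; declaring these to be $\cA_n$ and $\cB_n$ yields the block structure and the recurrences \eqref{MPrelations} in a single stroke.

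For \eqref{DmNP}, the transfer relation $\begin{pmatrix}\varphi\\ 1\end{pmatrix}\stackrel{.}{=}\cW_{[0,n]}\begin{pmatrix}H_n\\ 1\end{pmatrix}$ combined with the block form gives $\varphi=\bigl[(1-g_n)(i-\lambda)\cA_{n-1}H_n+\cA_n\bigr]/\bigl[(1-g_n)(i-\lambda)\cB_{n-1}H_n+\cB_n\bigr]$. I would then introduce the free parameter $\tau$ via $(\lambda-i)H_n(\lambda)=\tau(\lambda)^{-1}+\lambda$, multiply numerator and denominator by $-1$, and read off \eqref{DmNP}. The substantive content is that as $\tau$ runs over all Nevanlinna functions, $\varphi$ runs exactly over the Nevanlinna functions matching the prescribed first $n+1$ Taylor coefficients at $\lambda=i$; I would deduce this from Corollary \ref{CharNev} applied to the tail of the Wall continued fraction past level $n$, transported back through the Cayley transform \eqref{CayleyT}. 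Finally \eqref{FirstI} follows at once from \eqref{CayleyT}: at $\lambda=i$ one has $z=0$ and therefore $\varphi(i)=iF(0)=i$ since every Carath\'eodory function arising from a probability measure on $\dT$ satisfies $F(0)=1$ by \eqref{CarCond}.

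The hardest step is the surjectivity claim hidden inside \eqref{DmNP}: one has to check that the change of variables $H_n\leftrightarrow\tau$ puts the legitimate residuals of the Wall iteration in bijection with all Nevanlinna functions, so that the linear-fractional map \eqref{DmNP} describes every Nevanlinna extension and not merely a subfamily. Everything else is bookkeeping driven by the half-plane-preserving property of each factor $\cW_k$.
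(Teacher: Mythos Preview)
Your proof is correct and follows essentially the same route as the paper: both obtain the block form and \eqref{MPrelations} by writing $\cW_{[0,n]}=\cW_{[0,n-1]}\cW_n$ and reading off the four entries (the paper does this in one step rather than as a formal induction, but the computation is identical, including the use of $(i-\lambda)(i+\lambda)=-(\lambda^2+1)$), and both derive \eqref{DmNP} by the substitution $\tau(\lambda)=-1/\bigl(\lambda+(i-\lambda)H_n(\lambda)\bigr)$, equivalently $(\lambda-i)H_n=\tau^{-1}+\lambda$, and then appeal to Corollary~\ref{CharNev} for the range of the parameter. Your explicit convention $g_0:=0$ is a useful clarification that the paper only introduces later, in Section~5.

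One small correction: the parameter $\tau$ does not range over \emph{all} Nevanlinna functions but over those normalized by $\tau(i)=i$; this is forced by the substitution (set $\lambda=i$ in $(\lambda-i)H_n=\tau^{-1}+\lambda$) and is exactly the class produced by Corollary~\ref{CharNev}. In the paper's proof, the label \eqref{FirstI} is in fact applied to $\tau$, not to $\varphi$; your separate verification that $\varphi(i)=i$ via $F(0)=1$ is correct but is already contained in the interpolation data rather than being an extra conclusion.
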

\begin{proof}
At first, define $\cW_{[0,n]}$ to be
\[
\cW_{[0,n]}(\lambda)=
\begin{pmatrix}
\cC_{n}(\lambda)&\cA_{n}(\lambda)\\
\cD_{n}(\lambda)&\cB_{n}(\lambda)
\end{pmatrix}.
\] 
Next, the definition of  $\cW_{[0,n]}$  entails the formula
\[
\begin{split}
\begin{pmatrix}
\cC_{n}(\lambda)&\cA_{n}(\lambda)\\
\cD_{n}(\lambda)&\cB_{n}(\lambda)
\end{pmatrix}&=\cW_{[0,n]}(\lambda)=\cW_{[0,n-1]}(\lambda)\cW_{n}(\lambda)\\
&=
\begin{pmatrix}
\cC_{n-1}(\lambda)&\cA_{n-1}(\lambda)\\
\cD_{n-1}(\lambda)&\cB_{n-1}(\lambda)
\end{pmatrix}\begin{pmatrix}
0& g_{n}(i+\lambda)\\
(1-g_{n})(i-\lambda)& \lambda-r_{n}
\end{pmatrix},
\end{split}
\]
which gives 
\[
\cC_{n}(\lambda)=(1-g_n)(i-\lambda)\cA_{n-1}(\lambda), \quad \cD_{n}(\lambda)=(1-g_n)(i-\lambda)\cB_{n-1}(\lambda),
\] 
and then \eqref{MPrelations} follows from the rest of the relations. The initial conditions are the result of the form of $\cW_0$. So, we can now proceed to formula \eqref{DmNP}. As a matter of fact, it is a consequence of \eqref{ThforNtrunc} and the corresponding result for Schur functions \cite{DK03} if we set 
\[
\tau(\lambda)=-\frac{1}{\lambda+(i-\lambda)H_n(\lambda)},
\]
which means that
\[
\tau(\lambda)=-\cfr{1}{\lambda}-
\cfr{g_{n+1}(\lambda^2+1)}{\lambda-r_{n+1}}-
\cfr{(1-g_{n+1})g_{n+2}(\lambda^2+1)}{\lambda-r_{n+2}}-
\dots
\]
and therefore the function $\tau$ could be arbitrary Nevanlinna function verifying \eqref{FirstI} in view of Corollary \ref{CharNev}. 
\end{proof}
\begin{remark} Formulas \eqref{nIterations}, \eqref{nIterationsCar}, and \eqref{DmNP} represent a standard piece of the theory of any truncated Nevanlinna-Pick problem, which is the generality that includes all interpolation problems considered here (for more details see \cite[Chapter 3]{A65}). Those formulas are obtained one from another and, essentially, is just one formula. In particular, we have that
\[
i\frac{w_{1,1}^{(n)}(z)}{w_{2,1}^{(n)}(z)}=\frac{\cA_{n+1}(\lambda)}{\cB_{n+1}(\lambda)}, \quad z=\frac{i-\lambda}{i+\lambda},
\]
which is a way to have a connection between OPUC and the polynomials that we get on the real line.
\end{remark}

\section{The underlying linear pencils}

Here we will consider the recurrence relations \eqref{MPrelations} as a particular case of the theory of linear pencils of tridiagonal matrices that was elaborated in \cite{BDZh}, \cite{Der10}, and \cite{DZh}, which, in turn, had their origin in \cite{Zh99}.

Above all, there is no doubt that the relations
\begin{equation}\label{rec_rel}
u_{j+1}(\lambda)-(\lambda-r_{j})u_{j}(\lambda)+(1-g_{j-1})g_{j}(\lambda^2+1)u_{j-1}(\lambda)=0,
\quad j=0, 1, 2, \dots
\end{equation}
were known to H. S. Wall for they are naturally associated with the fraction \eqref{ThforN} (for example, see \cite[Section 2.1.1]{JT}). To be more clear, \eqref{rec_rel} is exactly the same as the second relation in \eqref{MPrelations} if one uses the following agreement
\[
u_{n}=\cB_{n-1}, \quad r_0=0, \quad g_0=0. 
\]
Evidently, the initial conditions
\[
u_{-1}=0, \quad u_0=1
\]   
guarantee that $u_{n}=\cB_{n-1}$.

At first glance, one my conclude that although \eqref{rec_rel} is a three-term recurrence relation, it looks peculiar and the connection to Jacobi matrices, which is a very powerful tool for OPRL, is unclear. That might be the reason it didn't attract any attention at that time. So, let's take a more careful look at \eqref{rec_rel} and try to reinterpret the relation as a spectral problem. With this thought in mind, one can rewrite \eqref{rec_rel} in the following manner
\[
u_{j+1}(\lambda)+r_{j}u_{j}(\lambda)+(1-g_{j-1})g_{j}u_{j-1}(\lambda)-\lambda u_{j}(\lambda)+(1-g_{j-1})g_{j}\lambda^2u_{j-1}(\lambda)=0,
\] 
which is a quadratic eigenvalue problem and is also known as a quadratic pencil. The explicit operator form
\[
\Scale[0.92]{
\begin{pmatrix}
  r_0 & 1&  &\\
  g_1& r_1& 1&\\
     & (1-g_{1})g_{2}&r_2&\\
     &&&\ddots&    
      \end{pmatrix}
      \begin{pmatrix}
      u_0\\
      u_1\\
      u_2\\
      \vdots
      \end{pmatrix}-\lambda \begin{pmatrix}
      u_0\\
      u_1\\
      u_2\\
      \vdots
      \end{pmatrix}
      +
     \lambda^2\begin{pmatrix}
     0&&&\\
     g_1&0&&\\
     &(1-g_{1})g_{2}&&\\
     &&\ddots&
     \end{pmatrix}
     \begin{pmatrix}
      u_0\\
      u_1\\
      u_2\\
      \vdots
      \end{pmatrix}=0}
\] 
looks messy but it's an operator interpretation of \eqref{rec_rel}. A usual method to deal with quadratic pencils is to reduce them to linear pencils, that is, to spectral problems of the form $(A-\lambda B)u=0$
and this is what we are going to do next. Actually, it would be rather efficient to use specifics of the problem than applying the standard machinery of pencils. Say, repeating the reasoning from \cite{Der10}, one can verify that the recurrence relation~\eqref{rec_rel} can be renormalized to the
following one
\begin{equation}\label{r_rl}
\mathfrak{b}_j(i-\lambda)\widehat{u}_{j+1}-
(\lambda-\mathfrak{a}_j)\widehat{u}_{j}-{\mathfrak{b}}_{j-1}(i+\lambda)\widehat{u}_{j-1}
=0,\quad j=0, 1, 2, \dots,
\end{equation}
where the numbers $\mathfrak{a}_j$ and $\mathfrak{b}_j$
are defined by the formulas
\begin{equation}\label{ab}
\mathfrak{a}_j=r_j,\quad \mathfrak{b}_j=\sqrt{(1-g_{j})g_{j+1}},\quad
j=0, 1, 2, \dots
\end{equation}
and the transformation $u\to\widehat{u}$ has the following form
\begin{equation}\label{wTransform}
\widehat{u}_0=u_0,\quad
\widehat{u}_j={\displaystyle\frac{u_j}{\mathfrak{b}_0\dots
\mathfrak{b}_{j-1}(i-\lambda)^j}},\quad j=1,2,3,\dots.
\end{equation}
Now, it is easy to see that relation~\eqref{r_rl} leads to the linear pencil 
\begin{equation}\label{WallPencil}
(H-\lambda J)\widehat{u}=0,
\end{equation}
 where
\[
H=\left(%
\begin{array}{cccc}
  \mathfrak{a}_0 & i\mathfrak{b}_0 &  &  \\
 -i{\mathfrak{b}}_0 & \mathfrak{a}_1 & i\mathfrak{b}_1 &  \\
      & -i{\mathfrak{b}}_1 & \mathfrak{a}_2 & \ddots \\
      &     & \ddots & \ddots \\
\end{array}%
\right),\quad\
J=\left(%
\begin{array}{cccc}
  1 & \mathfrak{b}_0 &  &  \\
  \mathfrak{b}_0 & 1 & \mathfrak{b}_1 &  \\
      & \mathfrak{b}_1 & 1 & \ddots \\
      &     & \ddots & \ddots \\
\end{array}%
\right)
\]
are Jacobi matrices. Apart from this, there is another way of transforming \eqref{rec_rel} into a linear pencil \cite[Proposition 3]{Zh99} but the method described in \cite[Proposition 3]{Zh99} gives a liner pencil that is formed by two bi-diagonal matrices. 

As a matter of fact, working with linear pencils is a bit more delicate than dealing with ordinary spectral problems. For instance, even if the operators $J$ and $H$ generating the pencil are symmetric, it doesn't mean that we have to expect good spectral properties. That is why we need to check if we can say more about the Jacobi matrices $J$ and $H$.    
\begin{proposition}\label{positiveWall}
 The operator $J$ is self-adjoint and nonnegative, that is,
\begin{equation}\label{Nonnegativity}
(J\xi,\xi)_{\ell^2}\ge 0
\end{equation}
for any finite $x\in \ell^2$, i.e. $\xi=(\xi_0,\xi_1,\dots,\xi_n,0,0\dots)^{\top}$.
\end{proposition}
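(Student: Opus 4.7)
The plan is to prove both assertions simultaneously by exhibiting an explicit Cholesky-type factorization $J = LL^*$ with $L$ a bounded lower bidiagonal operator on $\ell^2$. Boundedness of $L$ will yield boundedness and hence self-adjointness of $J$, while the factorization will give $(J\xi,\xi) = \|L^*\xi\|^2 \geq 0$ directly.

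First I would observe that since $g_k\in(0,1)$ for $k\geq 1$ and by convention $g_0=0$, every off-diagonal entry of $J$ satisfies
\begin{equation*}
\mathfrak{b}_j = \sqrt{(1-g_j)g_{j+1}} \leq 1,
\end{equation*}
so the symmetric tridiagonal operator $J$ is bounded on $\ell^2$ (its norm is controlled by the sum of the three band suprema, so $\|J\|\leq 3$). A bounded symmetric operator on $\ell^2$ is automatically self-adjoint, which settles the first claim.

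For nonnegativity I would make the educated guess, motivated by the identity $1=(1-g_j)+g_j$, to define the lower bidiagonal operator
\begin{equation*}
L_{j,j} = \sqrt{1-g_j}, \qquad L_{j+1,j} = \sqrt{g_{j+1}}, \qquad j=0,1,2,\dots,
\end{equation*}
with all other entries zero. Since each entry lies in $[0,1]$, $L$ is bounded on $\ell^2$. A direct computation of the entries of $LL^*$ then gives, for the diagonal, $(LL^*)_{j,j} = (1-g_j)+g_j = 1$ (using $g_0=0$ for $j=0$), and for the subdiagonal $(LL^*)_{j+1,j} = \sqrt{g_{j+1}}\sqrt{1-g_j} = \mathfrak{b}_j$; matching, $(LL^*)_{j,j+1}=\mathfrak{b}_j$ by symmetry. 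Hence $J=LL^*$, and for any finitely supported $\xi\in\ell^2$ one has
\begin{equation*}
(J\xi,\xi)_{\ell^2} = (LL^*\xi,\xi)_{\ell^2} = \|L^*\xi\|_{\ell^2}^2 \geq 0,
\end{equation*}
which is exactly \eqref{Nonnegativity}.

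There is essentially no obstacle once the factorization is in hand; the only step that requires a small insight is guessing the entries of $L$, and the algebraic identity $1=(1-g_j)+g_j$ that makes the diagonal telescope works precisely because the $g_j$ were designed to lie in $(0,1)$. It is worth remarking that this factorization also exhibits $L$ and $L^*$ as the natural building blocks of the pencil and foreshadows the probabilistic interpretation of the weights $g_j$, $1-g_j$.
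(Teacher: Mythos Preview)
Your proof is correct and takes a different, more explicit route than the paper. The paper writes out the quadratic form and then invokes Wall's chain sequence criterion \cite[Theorem 20.1]{Wall48} as a black box: since $\mathfrak{b}_k^2=(1-g_{k-1})g_k$ with $0\le g_0<1$ and $0<g_k<1$, the sequence $\{\mathfrak{b}_k^2\}$ is a chain sequence, and Wall's theorem then delivers nonnegativity of the truncated quadratic forms. You instead exhibit the Cholesky factorization $J=LL^*$ with $L_{j,j}=\sqrt{1-g_j}$, $L_{j+1,j}=\sqrt{g_{j+1}}$, which is really the constructive content \emph{behind} Wall's criterion; the chain sequence parametrization is precisely what makes such a bidiagonal square root exist. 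Your argument is self-contained and also addresses self-adjointness explicitly via boundedness, whereas the paper's proof leaves that point implicit. The trade-off is that the paper's appeal to chain sequences situates the result within a classical framework that the reader may already know, while your factorization makes the mechanism transparent and immediately suggests the operator-theoretic structure (e.g.\ $J^{1/2}$) used later in the pencil analysis.
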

\begin{proof}
Since $J$ has only real entries, it would be enough to prove \eqref{Nonnegativity} only for vector with real elements. So, let us consider the quadratic form
\begin{equation}\label{jform2}
\left(J\xi,\xi\right)=\xi_0^2+2\mathfrak{b}_0{\xi}_0{\xi}_1+\xi_1^2+2\mathfrak{b}_1{\xi}_1{\xi}_2+\dots+\xi_n^2,
\end{equation}
where
$\xi=({\xi}_0,{\xi}_1,\dots,{\xi}_n)^{\top}\in\dR^{n+1}$.
To complete the proof we need to use one of Wall's theorems on chain sequences. To this end, recall that a sequence $\mathfrak{b}_j^2$ is called a chain sequence (see \cite[Section 19]{Wall48} or \cite[Chapter III, Section 5]{Chi78}) if there exists a sequence $\{g_k\}_{k=0}^{\infty}$ such that
\[
\begin{split}
0\le g_0<1,\quad 0<g_k<1, \quad k=1,2,3\dots\\
\mathfrak{b}_k^2=(1-g_{k-1})g_k, \quad  k=1,2,3\dots.
\end{split}
\]
In other words, our sequence $\mathfrak{b}_j^2$ is definitely a chain sequence. Consequently, \cite[Theorem 20.1]{Wall48} (see also \cite[Chapter III, Sections 5 and 6 ]{Chi78}) brings us to the desired statement. 
\end{proof}
Therefore, we have the nonnegativity of $J$ and it suggests that there are ways to analyze the linear pencil relatively easy. However, it still has to be done a bit more carefully than in the ordinary case. The rough idea is to replace the problem   
$(H-\lambda J)\widehat{u}=0$ with an ordinary one such as $(J^{-1/2}HJ^{-1/2}-\lambda I)\widehat{u}=0$ (see \cite{Der10}) or $(J^{-1}H-\lambda I)\widehat{u}=0$. The task is not evident as there might be pitfalls and later on we will discuss an example with a drastic change in comparison with Jacobi matrices and that only happens for the linear pencil case. 
\begin{remark}
It is noteworthy that Proposition \ref{Nonnegativity} is just another form of one of Wall's results, which means that H. S. Wall has proved that the pencil $H-\lambda J$ is a good object in the sense of the spectral theory. Under this circumstances, it makes perfect sense to refer to the pencils in question as to Wall pencils. 
\end{remark}

Once we established the fact that Wall pencils lead to self-adjoint operators and, thus, have a reasonable spectral theory behind, we can discuss it.  At first, the results from \cite{Der10} are applicable to Wall pencils but under an additional restriction that the measure in the integral representation of the function $\varphi$ is a probability measure. This condition simply means that the underlying pencil can be reduced to a self-adjoint operator. In all other case, one has to handle non-densely defined self-adjoint operators. 

Regarding the behavior of the entries of $H-\lambda J$, we can get a pencil form for the majority of results from the theory of OPUC. In particular, we are ready to formulate the result that is mainly the motivation to write this note.
\begin{theorem}[The Denisov-Rakhmanov Theorem]\label{DR} Let $\varphi$ be a Nevanlinna function of the form 
\[
\varphi(\lambda)=a\lambda+\int_{\dR}\frac{1+t\lambda}{t-\lambda}d\sigma(t),
\]
which is subject to $\varphi(i)=i$.
If $\sigma'>0$ almost everywhere with respect to the Lebesgue measure on $\dR$ then for the corresponding Wall pencil \eqref{WallPencil} we have that
\begin{equation}
\mathfrak{a}_k\to 0, \quad \mathfrak{b}_k\to\frac{1}{2} \quad \text{ as } k\to \infty.
\end{equation}
\end{theorem}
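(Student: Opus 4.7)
The plan is to push the problem to the unit circle via Wall's Cayley-transform correspondence (Theorem \ref{WallTf}) and then invoke the classical Rakhmanov theorem for OPUC. The function $F(z)=-i\varphi(\lambda)$, $\lambda=i(1-z)/(1+z)$, is a Carath\'eodory function with $F(0)=1$, so by \eqref{IntCara} it is represented by a probability measure $\mu$ on $\dT$ whose Verblunsky coefficients are, by Geronimus, the same numbers $\gamma_k$ entering \eqref{gr}.

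The first substantive step is to translate the hypothesis $\sigma'>0$ a.e.\ on $\dR$ into the analogous statement $\mu'>0$ a.e.\ on $\dT$. On the boundary, the Cayley map is the diffeomorphism $\lambda=\tan(\theta/2)$ from $(-\pi,\pi)$ onto $\dR$. Computing boundary values via Stieltjes inversion yields $\pi(1+t^2)\sigma'(t)=\Im\varphi(t+i0)=\Rl F(e^{i\theta}+0)$ with $t=\tan(\theta/2)$, so the a.c.\ density of $\mu$ differs from $\sigma'(\tan(\theta/2))$ by the strictly positive finite factor $(1+\tan^2(\theta/2))/2$ on $(-\pi,\pi)$. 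In particular, $\sigma'>0$ a.e.\ on $\dR$ is equivalent to $\mu'>0$ a.e.\ on $\dT$; the linear term $a\lambda$ only contributes a possible atom of $\mu$ at $\theta=\pi$ and is irrelevant to a.e.\ positivity.

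With $\mu'>0$ a.e.\ on $\dT$ established, the classical Denisov-Rakhmanov theorem for OPUC forces $\gamma_n\to 0$. The remainder is purely algebraic: since $|\delta_n|=1$, we have $|\gamma_n\delta_n|\le|\gamma_n|\to 0$, so $1-\gamma_n\delta_n\to 1$. Plugging this limit into \eqref{gr} gives $g_{n+1}\to 1/2$ and $r_{n+1}\to 0$, and then \eqref{ab} delivers $\mathfrak{a}_j=r_j\to 0$ and $\mathfrak{b}_j=\sqrt{(1-g_j)g_{j+1}}\to 1/2$, as claimed.

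The main obstacle is the density-transfer step: while morally the identification of $\mu$ with a conformal image of $\sigma$ is routine, one must perform the boundary computation carefully when $a>0$ (so that $\sigma$ need not be a probability measure and part of the mass of $\mu$ sits at a single boundary point) to make sure no absolutely continuous contribution is created or lost. Everything else is bookkeeping with formulas already established in the paper.
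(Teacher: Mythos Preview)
Your proof is correct and follows essentially the same route as the paper's own argument: transfer $\sigma'>0$ a.e.\ on $\dR$ to $\mu'>0$ a.e.\ on $\dT$ via the Cayley transform, invoke Rakhmanov's theorem on the unit circle to get $\gamma_n\to 0$, and then read off the limits of $g_k$, $r_k$, $\mathfrak{a}_k$, $\mathfrak{b}_k$ from \eqref{gr} and \eqref{ab}. The only difference is that you spell out the boundary density computation and the handling of the linear term $a\lambda$ explicitly, whereas the paper simply cites \cite[Section~59]{AG} for the relation between $\sigma$ and $\mu$ and declares the rest ``immediate''.
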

\begin{proof}
The statement is an immediate consequence of the Rakhmanov theorem for the unit circle and the Wall constructions. Indeed, the condition that
 $\sigma'>0$ almost everywhere with respect to the Lebesgue measure on $\dR$ ensures that the corresponding measure $\mu$ on the unit circle is positive almost everywhere on $\dT$. Actually, the measure $\mu$ is determined by $F$, which, in turn, is defined through \eqref{CayleyT}. Besides, the relation between $\sigma$ and $\mu$ is explicit and can be found in  \cite[Section 59]{AG}. Next, it follows from the Rakhmanov theorem (for example, see \cite[Corollary 9.1.11]{OPUC2}) 
that
\[
\lim_{k\to\infty}\gamma_k=0.
\] 
 The rest is immediate from formulas \eqref{gr} and \eqref{ab}. 
\end{proof}

As one might see the situation is a bit unusual because the entries of the pencil are uniformly bounded but the corresponding measure is supported on the whole real line, which never happens for OPRL. The trick with the support is hidden in the property of $J$, which has no bounded inverse. That is, when we reduce the pencil spectral problem to an ordinary one we have to deal with 
the operator $J^{-1/2}HJ^{-1/2}$, which is not bounded and, in the general situation we have here, doesn't have to be densely defined (see \cite{Der10}).

Note that Theorem \ref{DR} is just another way to look at the spectral theory of OPUC but it does lead to further insights in the theory of linear pencils. Another interesting observation can be made. To this end, let us recall that the results from \cite{Der10} and \cite{DZh} together with the Wall theory basically read that any Nevanlinna function 
\[
\varphi(\lambda)=a\lambda+b+\int_{\dR}\frac{1+t\lambda}{t-\lambda}d\sigma(t),
\]
admits the continued fraction representation
\begin{equation}\label{ContF}
\varphi(\lambda)=-\cfr{1}{a^{(2)}_0\lambda-a_0^{(1)}}-
    \cfr{b_0^2(\lambda-z_0)(\lambda-\overline{z}_0)}{a^{(2)}_1\lambda-a_1^{(1)}}-
    \cfr{b_{1}^2(\lambda-z_1)(\lambda-\overline{z}_1)}{a^{(2)}_2\lambda-a_2^{(1)}}-
    \dots,
\end{equation}
where $z_0$, $z_1$, $\dots$  are some given numbers from $\dC_+$, that is, they are the interpolation nodes. Besides, this continued fraction and the theory in \cite{Der10} and \cite{DZh} are based on a different renormalization of the entries. Still, it is also a result of the Schur algorithm but to obtain \eqref{ContF} one needs to change the point at which the Schur transformation is performed. In addition, as one can see from the findings in \cite{Der10} and \cite{DZh} the entries of \eqref{ContF} depend on the respective interpolation node continuously. Therefore, it seems plausible that the following statement is true.  

\begin{conjecture} Let $\sigma$ be the measure corresponding to $\varphi$. If $\sigma'>0$ almost everywhere with respect to the Lebesgue measure on $\dR$ and $z_k\to i$ as $k\to\infty$ then there exists a renormalization of the coefficients of \eqref{ContF} such that
\[
a_k^{(1)}\to 0,\quad a^{(2)}_k\to 1,\quad b_{k}\to\frac{1}{2}\quad \text{ as } k\to \infty.
\]
\end{conjecture}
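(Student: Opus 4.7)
The plan is to transfer the problem to the unit disc via the Cayley transform \eqref{CayleyT}, where the classical Rakhmanov theorem together with a continuity argument for the multi-point Schur algorithm in the interpolation nodes should do the job.

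First, I would use the correspondence $z \mapsto \zeta = (i-z)/(i+z)$: the nodes $z_k\in\dC_+$ become $\zeta_k\in\dD$, and $z_k\to i$ becomes $\zeta_k\to 0$. The Nevanlinna function $\varphi$ is paired with a Schur function $f$ on the disc, and the continued fraction \eqref{ContF}, read through this change of variable, is the image of a \emph{multi-point} Schur algorithm on $\dD$ with interpolation nodes $\zeta_k$. In particular, the quadratic factors $(\lambda-z_k)(\lambda-\bar z_k)$ in \eqref{ContF} are exactly what the single factor $(\lambda^2+1)$ of \eqref{ThforN} becomes once the fixed node $i$ is replaced by the varying node $z_k$; when all $z_k=i$ we recover precisely the situation of Theorem \ref{DR}.

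Second, following the renormalization recipe of \cite{Der10, DZh}, the triples $(a_k^{(1)},a_k^{(2)},b_k)$ are equivalent, via an explicit algebraic change of variables generalizing \eqref{gr}--\eqref{ab}, to multi-point Schur parameters $\gamma_k=\gamma_k(\zeta_0,\dots,\zeta_k)$ of $f$. The statement to prove thus reduces to a multi-point Rakhmanov theorem on $\dD$: if the measure $\mu$ on $\dT$ associated with $\varphi$ (via the explicit formula in \cite[Section 59]{AG}, which turns the hypothesis $\sigma'>0$ a.e.\ on $\dR$ into $\mu'>0$ a.e.\ on $\dT$) and if $\zeta_k\to 0$, then $\gamma_k\to 0$. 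Once this is in hand, the desired limits $a_k^{(1)}\to 0$, $a_k^{(2)}\to 1$, $b_k\to 1/2$ follow by the same algebra used in the proof of Theorem \ref{DR}.

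Third, I would establish this multi-point Rakhmanov theorem by a perturbation argument from the case of a constant node $\zeta\equiv 0$, which is the classical Rakhmanov theorem. The multi-point Schur iteration depends continuously on its nodes (as made explicit in \cite{Der10, DZh}); for a fixed measure, the $k$-th parameter obtained from $(\zeta_0,\dots,\zeta_k)$ is close to the one obtained from $(0,\dots,0)$ provided $\max_{j\le k}|\zeta_j|$ is small, and Rakhmanov's theorem tells us that the latter tends to $0$. A diagonal/triangle-inequality argument, applied after first choosing $k_0$ so that classical Rakhmanov gives small parameters for all $k\ge k_0$ when all nodes are $0$, and then choosing $K\ge k_0$ so that $|\zeta_j|$ is small for $j\ge K$, should combine these two facts to give $\gamma_k\to 0$.

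The main obstacle is quantitative and uniform: pointwise continuity in a finite tuple of nodes is classical, but one needs a modulus that behaves well as the tuple length grows, since varying $\zeta_j$ for small $j$ can in principle perturb every later iterate. The natural way to tame this is to exploit that the tail Schur functions $f_n$ remain Schur (hence uniformly bounded) and that perturbations propagate with a contraction factor involving $(1-|\gamma_n|^2)$ coming from \eqref{SchurAlg}; combined with the decay $\gamma_n\to 0$ supplied by classical Rakhmanov, this should damp the perturbation uniformly. Turning this heuristic into a clean stability estimate for the multi-point Schur algorithm is, I expect, the hard part of the proof; all the remaining steps are algebraic manipulations already present in Sections 3--5.
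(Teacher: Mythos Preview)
The paper does not prove this statement: it is explicitly presented as a \emph{Conjecture}, introduced with ``it seems plausible that the following statement is true'', and no argument is offered. So there is nothing to compare your proposal against on the paper's side.

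As for your outline itself: the reduction via the Cayley transform to a multi-point Schur algorithm on $\dD$, and the translation of the hypothesis $\sigma'>0$ a.e.\ on $\dR$ into $\mu'>0$ a.e.\ on $\dT$, are exactly the right first moves and mirror how Theorem~\ref{DR} is proved. The genuine gap is precisely the one you flag in your last paragraph. Pointwise continuity of the $k$-th multi-point parameter in a \emph{fixed} finite tuple $(\zeta_0,\dots,\zeta_k)$ is not enough: the early nodes $\zeta_0,\dots,\zeta_{K-1}$ need not be small, and they enter every later iterate. Your proposed fix---pass to the tail Schur function $f_K$ and use that the remaining nodes are uniformly close to $0$---still requires two nontrivial ingredients: (i) that the tail measure associated with $f_K$ again satisfies $\mu'>0$ a.e.\ (plausible, since each multi-point Schur step is a rational/Blaschke modification with unimodular boundary factor, but this has to be checked in the multi-point setting), and (ii) a stability estimate for the multi-point recursion that is uniform in the step index when all nodes lie in a small disc around $0$. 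The damping heuristic via the factors $(1-|\gamma_n|^2)$ is suggestive but circular as stated, since you are invoking $\gamma_n\to 0$ to control perturbations whose purpose is to establish $\gamma_n\to 0$. Until (ii) is made into an honest a priori bound---for instance by working with the transfer matrices $\cW_n$ and controlling their product norms uniformly for nodes near $0$---your proposal remains a plausible strategy rather than a proof, which is consistent with the paper leaving this as a conjecture.
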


Furthermore, one can easily reformulate the Szeg\H{o} theorem in terms of Wall pencils and then make a similar conjecture and that can practically be done for the majority of the results from \cite{OPUC1} and \cite{OPUC2}.  

\section{The reference measure and other related distributions}

Now is the time to consider examples. The first one that we have already encountered is the reference measure, that is,  the limiting pencil from Theorem \ref{DR}. Since the essence of that theorem comes from OPUC, i.e., from the relation 
\[
\lim_{k\to\infty}\gamma_k=0,
\] 
we just need to find the Wall pencil that corresponds to the case
\[
\gamma_k=0, \quad k=0,1,2,\dots.
\]
Next, \eqref{gr} and \eqref{ab} yield
\[
\left(%
\begin{array}{cccc}
  0 & \frac{1}{\sqrt{2}}i &  &  \\
  -\frac{1}{\sqrt{2}}i & 0 & \frac{1}{2}i &  \\
      & -\frac{1}{2}i & 0 & \ddots \\
      &     & \ddots & \ddots \\
\end{array}%
\right)-\lambda \left(%
\begin{array}{cccc}
 1 & \frac{1}{\sqrt{2}} &  &  \\
  \frac{1}{\sqrt{2}} & 1 & \frac{1}{2} &  \\
      & \frac{1}{2} & 1 & \ddots \\
      &     & \ddots & \ddots \\
\end{array}%
\right).
\]
and the underlying Nevanlinna function has the following continued fraction representation
\[
m_0(\lambda)=
-\frac{1}{\lambda-\displaystyle{\frac{\frac{1}{2}(\lambda^2+1)}{{\lambda}-
\displaystyle{\frac{\frac{1}{4}(\lambda^2+1)}{{\ddots}}}}}}.
\]
Besides, since $f(z)\equiv 0$ in $\dD$ is the function that gives the sequence of Schur parameters that are equal to zero,  we can easily get that $m_0(\lambda)=i$ in $\dC_+$, which can be extended to the lower half-plane
\[
m_0(\lambda)=\begin{cases} i &\mbox{if } \lambda\in\dC_+ \\ 
-i & \mbox{if } \lambda\in\dC_- \end{cases}. 
\] 
As is known, $m_0$ has the following integral representation 
\[
m_0(\lambda)=\int_{\dR}\frac{1+t\lambda}{t-\lambda}\frac{dt}{1+t^2}.
\]
In a sense, this example is the simplest one in the theory of Wall pencils but has a feature showing a difference between Wall pencils and Jacobi matrices. Namely, one of the most powerful tools in the theory of Jacobi matrices is the $m$-function (aka Weyl function). Now, if one thinks in terms of $m$-functions of Jacobi matrices then one would define the $m$-function of a Wall pencil by the following formula
\[
m_0(\lambda)=((H_0-\lambda J_0 )^{-1}e,e)_{\ell^2}, \quad e=(1,0,\dots)^\top,
\]
which we would expect to hold for all $\lambda\in\dC_+$ as we are dealing with the self-adjoint case (there is only one solution of the corresponding interpolation problem).
Thus, we would also have
\[
m_0(\lambda)=\frac{1}{\lambda}((1/\lambda-(H_0-\lambda_0 J_0)^{-1}J_0)^{-1}(H_0-\lambda_0J_0)^{-1}e,e)_{\ell^2},
\] 
where $\lambda_0\in\dC_+$ is some fixed point. The latter would imply that
\[
m_0(\lambda)\to 0, \quad \lambda=iy, \quad y\to\infty,
\]
which is impossible in our case. This dissimilarity means that we cannot define $m$-functions of Wall pencils in the same way as it is done for Jacobi matrices (cf. \cite{Der10}). For instance, it could be done in a way similar to the way one defines Weyl functions for differential operators. Say, to this end one could use boundary triplets and abstract Weyl functions \cite{DM95}. Let us stress that although Jacobi matrices and Wall pencils are generated by different interpolation problems, Wall pencils are more generic as they are in one-to-one corresponds with the entire class of Nevanlinna functions, which is not the case for Jacobi matrices. 

In principal, any explicit example of OPUC can be transformed to a Wall pencil, whose entries and the generating measure can be computed. However, looking at the integral representation of $m_0$ one sees the Cauchy distribution in there and it leads to certain ideas related to the mechanism of the transformation. Actually, it turns out that one can easily construct a family of examples based on the Cauchy distribution. As a matter of fact, \cite{BO} deals with the pseudo-Jacobi ensemble, which is based on the Cauchy distribution, and some formulas from \cite{BO} show the presence of functions that would be appropriate to consider in the framework of this note. So, following \cite{BO} 
let us recall that the Gauss hypergeometric function is a function defined via the series
\[
{{}_2F_1\left(\left.\begin{array}{c}a,b\\ c \end{array}\right|z\right)=
1+\frac{a b}{c}\frac{z}{1!}+\frac{a(a+1)b(b+1)}{c(c+1)}\frac{z^2}{2!}
+\dots,}
\]
where $z$ is an independent variable and $a$, $b$, and $c$ are complex parameters. Clearly, if either $a$ or $b$ is a negative integer then ${}_2F_1\left(\left.\begin{array}{c}a,b\\ c \end{array}\right|z\right)$ is a polynomial. Therefore, setting $a=-n$ to be a nonpositive integer and $z=1/(1+ix)$ we see that the function
\[
R_n(x)={}_2F_1\left(\left.\begin{array}{c}-n,b\\ c \end{array}\right| \frac{2}{1+ix}\right)
\]
is a rational function in a new variable $x$. Next, using the contiguous relation \cite[Section 2.5]{AAR99}
\[
\Scale[0.85]{
a(1-z){}_2F_1\left(\left.\begin{array}{c}a+1,b\\ c \end{array}\right| z\right)+
(c-2a-(b-a)z){}_2F_1\left(\left.\begin{array}{c}a,b\\ c \end{array}\right| z\right)-
(c-a){}_2F_1\left(\left.\begin{array}{c}a-1,b\\ c \end{array}\right| z\right)=0}
\]  
for the function $R_n$, one gets
\[
-n\left(1-\frac{2}{1+ix}\right)R_{n-1}(x)+\left(c+2n-(b+n)\frac{2}{1+ix}\right)R_n(x)-(b+n)R_{n+1}(x)=0,
\]
which reduces to the following recurrence relation
\begin{equation}\label{CauchyCom}
n(x+i)R_{n-1}(x)+((c-2b)i-x(c+2n))R_n(x)+(x-i)(b+n)R_{n+1}(x)=0.
\end{equation}
As was shown in \cite{IM95} (see also \cite{Zh99}) if a system satisfies a relation of the type \eqref{CauchyCom}, then it is a system of orthogonal rational functions. That is, the rational functions $R_n$ are orthogonal with respect to some functional. As a matter of fact, \eqref{CauchyCom} is far beyond the scope of the current paper since its coefficients are generally complex numbers with no symmetry. However, if we restrict ourselves to the case
\[
c=2b, \quad b=s\in\dR,
\]
we get the following statement for the functions $R_n(x)=R_n(x,s)$. 
\begin{proposition}
The rational functions $R_n(x,s)$ satisfy the following doubly spectral relations
\begin{equation}\label{CauchyNonSym1}
n(x+i)R_{n-1}(x,s)+2x(s+n)R_n(x,s)+(2s+n)(x-i)R_{n+1}(x,s)=0.
\end{equation}
This means that if $s$ is fixed then \eqref{CauchyNonSym1} as an $x$-relation is easily reducible to \eqref{r_rl}, which is equivalent to a recurrence relation of type $R_{II}$ introduced in \cite{IM95}, and if $x$ is fixed then  \eqref{CauchyNonSym1} is an $R_I$-type recurrence relation in $s$ introduced in \cite{IM95} as well. 

Consequently, there exist a functional in $x$ and a functional in $s$ such that the rational functions $R_n(x,s)$ form an orthogonal system in $s$ and an orthogonal system in $x$.  
\end{proposition}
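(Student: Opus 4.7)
The plan is to carry out three structural steps and then invoke the Ismail--Masson orthogonality theorems.

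First I would verify \eqref{CauchyNonSym1} by substituting $c=2b$ and $b=s$ into \eqref{CauchyCom}: the $i(c-2b)$ piece of the middle coefficient vanishes (which is precisely what makes the resulting three-term recurrence symmetric), and collecting the surviving polynomial factors in $(s,n)$ yields the claimed form. This is a purely algebraic check.

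For the $x$-direction with $s\in\dR$ fixed, I would recast \eqref{CauchyNonSym1} into the shape of \eqref{r_rl} via a diagonal rescaling $\widehat{R}_n=f_n(s)R_n$. Writing \eqref{CauchyNonSym1} as
\[
(2s+n)(x-i)R_{n+1}+2x(s+n)R_n+n(x+i)R_{n-1}=0
\]
and dividing through by $2(s+n)$, the $R_n$-coefficient becomes $x$, matching the $x-\mathfrak{a}_n$ slot of \eqref{r_rl} with $\mathfrak{a}_n=0$. A suitable choice of $f_n$ then aligns the outer coefficients with $\mathfrak{b}_n(x-i)$ and $\mathfrak{b}_{n-1}(x+i)$; the consistency requirement that the $\mathfrak{b}_n$ read off from the forward term at level $n$ matches the one read off from the backward term at level $n+1$ forces $f_n^2$ to satisfy a simple first-order recursion in $n$, which I would solve in closed form as a ratio of Pochhammer-type factors in $s$. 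Once the recurrence is in the form \eqref{r_rl}, its $R_{II}$-character is exactly the content already recorded in Section~5, and Ismail--Masson \cite{IM95} supplies the $x$-functional of orthogonality.

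For the $s$-direction with $x\in\dR$ fixed, I would simply inspect the $s$-degrees of the three coefficients in \eqref{CauchyNonSym1}: the coefficients of $R_{n+1}$ and $R_n$ are linear in $s$ while the coefficient of $R_{n-1}$ is constant. This is precisely the polynomial structure of an $R_I$-type recurrence of \cite{IM95}, possibly in its degenerate form, with the pole of the rational solutions dictated by the zero $s=-n/2$ of the leading coefficient; the general $R_I$ existence theorem then produces the $s$-functional with respect to which the $R_n(x,\cdot)$ form a biorthogonal system.

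The main obstacle I expect is administrative rather than conceptual: one must verify that the rescaling $f_n(s)$ is well-defined on the range of $s$ one wishes to treat (no Pochhammer factor vanishes) and that the slightly degenerate $s$-recurrence --- whose coefficient of $R_{n-1}$ does not depend on $s$ --- still falls within the scope of the $R_I$ orthogonality theorem of \cite{IM95}. Both are straightforward away from a discrete exceptional set in $s$, and together they pin down the parameter values for which the orthogonality assertion rigorously holds.
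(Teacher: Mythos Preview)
Your proposal is correct and follows essentially the same route as the paper. The paper's own proof is a two-sentence invocation of the Favard-type theorems of \cite{IM95}: regard \eqref{CauchyNonSym1} first as a recurrence in $x$, then as a recurrence in $s$, and cite Ismail--Masson in each case. Your three steps reproduce exactly this, with more detail; in particular, the diagonal rescaling $R_n\mapsto f_n(s)R_n$ you describe in the $x$-direction is precisely the transformation $R_n\mapsto C_n$ that the paper writes out explicitly immediately \emph{after} the proposition (leading to \eqref{CauchySym} and the chain-sequence identification \eqref{chain}). Your cautionary remarks about the well-definedness of the Pochhammer factors and about the degenerate shape of the $s$-recurrence are warranted and go a bit beyond what the paper states, but they do not change the argument.
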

\begin{proof}
The proof is just the application of Favard's type results from \cite{IM95} to \eqref{CauchyNonSym1}, i.e. we first consider \eqref{CauchyNonSym1} as a relation in $x$ and get the functional in $x$. Then, we look at  \eqref{CauchyNonSym1} as a relation in $s$. 
\end{proof}
\begin{remark}
The polynomials $(x-i)^nR_n$ appear in \cite{BO} and it is their large-$n$ behavior that essentially matters for the pseudo-Jacobi ensemble.
\end{remark}
We can also make \eqref{CauchyNonSym1} symmetric, which can be done by introducing new rational functions
\[
C_n(x)=\sqrt{1^{-1}\frac{1}{2(1+s)}\dots \left(\frac{2s+n-1}{2(n-1+s)}\right)^{-1}\frac{n}{2(n+s)}}R_n(x).
\]
For the new rational functions, relation \eqref{CauchyNonSym1} reads
\begin{equation}\label{CauchySym}
\small
\sqrt{\frac{(n-1+2s)n}{4(n-1+s)(n+s)}}(x+i)C_{n-1}(x)+xC_n(x)+
\sqrt{\frac{(n+2s)(n+1)}{4(n+s)(n+1+s)}}(x-i)C_{n+1}(x)=0.
\end{equation}
\normalsize
One of good things that come from \eqref{CauchySym} is that the zeroes of $C_n$ are real (say, the results of \cite{DZh} or \cite{Der10} can be applied in this case). Besides, one can also notice that for $C_n$ we have
\begin{equation}\label{chain}
r_{k-1}=0,\quad g_k=\frac{k}{2k+2s},\quad k=1,2,3,\dots.
\end{equation}
which shows that the example fits into the Wall theory as long as $0<g_k<1$ for all $k$, that is, when 
\[
s>-\frac{1}{2}.
\]
However, if $s>1/2$ then we can explicitly write the measure of orthogonality by making a connection to pseudo-Jacobi polynomials, which are also called Routh-Romanovski polynomials.
\begin{proposition} If $s>1/2$ then 
\[
\int_{\dR}C_n(t)\frac{1}{(t-i)^k}\frac{dt}{(1+t^2)^s}=0, \quad k=1,2,\dots, n.
\]
\end{proposition}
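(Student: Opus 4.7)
The plan is to identify $S_n(t):=(t-i)^n C_n(t)$, up to the explicit positive normalizing constant in \eqref{CauchySym}, as the Routh--Romanovski pseudo-Jacobi polynomial of degree $n$ orthogonal on $\mathbb{R}$ with respect to the weight $(1+t^2)^{-(n+s)}$, and then to translate that classical polynomial orthogonality back into the claimed rational-function orthogonality.

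First I check that $S_n$ is a polynomial of degree $n$ with real coefficients. The hypergeometric series
\[
R_n(t) \;=\; \sum_{j=0}^{n}\binom{n}{j}\frac{(s)_j (2i)^j}{(2s)_j}\,\frac{1}{(t-i)^j}
\]
shows that $S_n=(t-i)^n R_n$ is a polynomial of degree $n$ in $t-i$. Multiplying the three-term recurrence for $R_n$ through by $(t-i)^{n+1}$ gives
\[
(2s+n)\,S_{n+1}(t) \;=\; -2(s+n)\,t\,S_n(t) \;-\; n(1+t^2)\,S_{n-1}(t),
\]
which has real coefficients; together with $S_0=1,S_1=t$ this forces every $S_n$ to have real coefficients.

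The heart of the argument is the Rodrigues-type representation
\[
S_n(t) \;=\; \frac{(-1)^n}{(2s)_n}\,(1+t^2)^{n+s}\,\frac{d^n}{dt^n}\bigl[(1+t^2)^{-s}\bigr].
\]
A direct induction shows that $\frac{d^n}{dt^n}[(1+t^2)^{-s}]=Q_n(t)/(1+t^2)^{n+s}$ with $Q_n$ a polynomial of degree $n$ satisfying $Q_{n+1}(t)=(1+t^2)Q_n'(t)-2(n+s)t\,Q_n(t)$ with $Q_0=1$, and having leading coefficient $(-1)^n(2s)_n$. Matching this recurrence and its leading coefficient against the recurrence for $S_n$ displayed above pins down the identification. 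This is the main obstacle: one has to keep the bookkeeping careful enough to align both the recurrences and the initial data, and to absorb the overall constant $(-1)^n/(2s)_n$ correctly.

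Equipped with the Rodrigues formula, $n$ successive integrations by parts give, for any polynomial $q$ of degree strictly less than $n$,
\[
\int_{\mathbb{R}}\frac{S_n(t)\,q(t)}{(1+t^2)^{n+s}}\,dt \;=\; \frac{1}{(2s)_n}\int_{\mathbb{R}} q^{(n)}(t)(1+t^2)^{-s}\,dt \;=\; 0.
\]
The hypothesis $s>1/2$ is exactly what guarantees absolute convergence of every intermediate integral and vanishing of the boundary contributions at $\pm\infty$, since the $j$-th derivative of $(1+t^2)^{-s}$ decays like $|t|^{-2s-j}$ and $q^{(n-1-j)}(t)$ grows at most polynomially of degree $j$, so the product behaves like $|t|^{-2s}\to 0$. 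Specializing $q(t)=(t+i)^{n-k}$ for $k=1,\dots,n$ (a polynomial of degree $n-k<n$) and using the factorization $(1+t^2)^{n+s}=(t-i)^n(t+i)^n(1+t^2)^s$ together with $R_n=S_n/(t-i)^n$ reduces this polynomial orthogonality to the rational-function orthogonality claimed in the proposition.
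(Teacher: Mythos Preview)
Your strategy mirrors the paper's: both arguments identify $(t-i)^nR_n(t)$ (up to a constant) with a pseudo-Jacobi (Routh--Romanovski) polynomial, establish that this polynomial is orthogonal to all lower-degree polynomials with respect to $(1+t^2)^{-n-s}\,dt$, and then unwind to the rational statement. The paper gets the identification by reversing the finite ${}_2F_1$ sum to recognise $P_n^{(-s-n,-s-n)}(-ix)$ and then simply quotes Askey's orthogonality; you instead aim for a Rodrigues representation and supply the orthogonality yourself by $n$-fold integration by parts. Your route is more self-contained and makes the hypothesis $s>\tfrac12$ visibly responsible for the vanishing boundary terms, which the paper's citation hides.

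There is, however, a real gap in your Rodrigues step. The relation $Q_{n+1}=(1+t^2)Q_n'-2(n+s)tQ_n$ involves a derivative, so it cannot be ``matched'' against the three-term recurrence for $S_n$; you must either derive the three-term recurrence satisfied by the $Q_n$ (the classical one for Romanovski polynomials) and compare, or verify the Rodrigues identity directly from the hypergeometric sum. Two smaller points: the sign in your displayed recurrence is off---with the Gauss contiguous relation one gets $(2s+n)\widetilde S_{n+1}=+2(s+n)t\,\widetilde S_n-n(1+t^2)\widetilde S_{n-1}$ for $\widetilde S_n=(t-i)^nR_n$; and your final specialisation (exactly like the paper's use of Askey) actually produces $\int_{\mathbb R}C_n(t)(t+i)^{-k}(1+t^2)^{-s}\,dt=0$, not $(t-i)^{-k}$. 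A direct residue computation at $n=k=1$ shows $\int_{\mathbb R}R_1(t)(t-i)^{-1}(1+t^2)^{-1}\,dt=\pi i/4\neq 0$, so the statement should read $(t+i)^k$ in the denominator---your proof and the paper's both establish that corrected version.
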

\begin{proof}
Without loss of generality, we will show the orthogonality for $R_n$ rather than for $C_n$. At first, we notice that $R_n$ is a finite sum and the summation can be reversed in order to get the following
\[
\begin{split}
R_n(x)&={}_2F_1\left(\left.\begin{array}{c}-n,s\\ 2s \end{array}\right| \frac{2}{1+ix}\right)\\
&=\left(\frac{-2}{1+ix}\right)^n\frac{(s)_n}{(2s)_n}{}_2F_1\left(\left.\begin{array}{c}-n,1-n-2s\\ 1-n-s \end{array}\right| \frac{1+ix}{2}\right),
\end{split}
\]
where $(a)_n$ is the Pochhammer  symbol, that is, $(a)_n=a(a+1)\dots(a+n-1)$.
Recalling that Jacobi polynomials are defined by
\[
P_n^{(\alpha,\beta)}(x)=\frac{(\alpha+1)_n}{n!}{}_2F_1\left(\left.\begin{array}{c}-n,n+\alpha+\beta+1\\ \alpha+1 \end{array}\right| \frac{1-x}{2}\right)
\]
we get that
\[
R_n(x)= \frac{c_n}{(1+ix)^n}P_n^{(-s-n,-s-n)}(-ix).
\]
Next, according to \cite{As87} we have
\[
\int_{\dR}P_n^{(-n-s,-n-s)}(ix)(1+ix)^m\frac{dx}{(1+x^2)^{s+n}}=0, \quad m=0,1,2\dots n-1,
\]
which clearly leads to the desired result.
\end{proof}
As one might notice, the theory related to the example may easily go beyond the condition $s>-\frac{1}{2}$. Say, if $s$ is not a negative integer, then we can make the inverse Wall transformation, which will return a sequence of Schur parameters. Then the formulas that determine $\gamma_k$ from $r_k$ and $g_k$ show that there could be only a finite number of Schur parameters that lie outside of the closed unit disc. Such situations are feseable to understand and, recently, it has been shown in \cite{DS16} that the OPUC techniques can still work in such nonclassical cases. 

\medskip

\noindent\textbf{Acknowledgements.} I'd like to thank Alexei Zhedanov, who has been feeding me with the information on the theory underlying $R_{II}$ recurrence relations for years and who, after reading the manuscript, provided me with a few comments that helped to improve the presentation of the paper. Besides, I'm very grateful to my wife, Anastasiia, who read the manuscript and found an enormous amount of typos and inaccuracies.

\end{document}